\newcommand{\R} {\mathbb{R}}
\newcommand{\N} {\mathbb{N}}
\newcommand{\eps}{\epsilon}
\def\eto{\buildrel \epsilon\to 0\over\longrightarrow }
\def\etow{\buildrel \epsilon\to 0\over\rightharpoonup}
\begin{document}
\title[Highly oscillatory thin domains]{The Neumann problem in thin domains with very highly oscillatory boundaries}
\thanks{ Math Subject Classification (2010): 35B27, 74K10}
\author[J. M. Arrieta]{Jos\'{e} M. Arrieta$^{*, \dag}$}
\thanks{$^*$ Corresponding author:  Jos\'e M. Arrieta,  Departamento de Matem\'atica Aplicada, Facultad de Matem\'aticas, 
Universidad Complutense de Madrid, 28040  Madrid, Spain. e-mail: arrieta@mat.ucm.es}
\thanks{$^\dag$ Partially
supported by:  MTM2009-07540  (MICINN) and MTM2012-31298 (MINECO), Spain; PHB2006-003 PC from MICINN;
     and  GR58/08 and GR35/10-A, Grupo 920894 BSCH-UCM, Spain.}
\address[Jos\'e M. Arrieta]{Departamento de Matem\'atica Aplicada,
Facultad de Ma\-te\-m\'a\-ti\-cas, Universidad Complutense de
Madrid, 28040 Madrid, Spain.} \email{arrieta@mat.ucm.es}

\author[M.C.Pereira]{Marcone C. Pereira$^{*,\ddag}$}
\thanks{$^\ddag$ Partially supported by CNPq 305210/2008-4 and 302847/2011-1, FAPESP 2008/53094-4, 2010/18790-0 and 2011/08929-3.}
\address[Marcone C. Pereira]{Escola de Artes, Ci\^encias e Humanidades \\ Universidade de S\~ao
Paulo, S\~ao Paulo, SP, Brazil} \email{marcone@usp.br}



\date{}

\begin{abstract}
In this paper we analyze the behavior of solutions of the Neumann problem posed in a thin domain of the type 
$R^\epsilon = \{ (x_1,x_2) \in \R^2 \; | \;  x_1 \in (0,1), \, - \, \eps \, b(x_1) < x_2 < \epsilon \, G(x_1, x_1/\eps^\alpha) \}$ 
with $\alpha>1$ and $\epsilon > 0$, defined by smooth functions $b(x)$ and $G(x,y)$, where the function $G$ is supposed to be 
$l(x)$-periodic in the second variable $y$. 
The condition $\alpha > 1$ implies that the upper boundary of this thin domain presents a very high oscillatory behavior.
Indeed, we have that the order of its oscillations is larger  than the order of the amplitude and height of $R^\epsilon$ 
given by the small parameter $\epsilon$.
We also consider more general and complicated geometries for thin domains 
which are not given as the graph of certain smooth functions, but rather more comb-like domains.
\end{abstract}

\maketitle
\numberwithin{equation}{section}
\newtheorem{theorem}{Theorem}[section]
\newtheorem{lemma}[theorem]{Lemma}
\newtheorem{corollary}[theorem]{Corollary}
\newtheorem{proposition}[theorem]{Proposition}
\newtheorem{remark}[theorem]{Remark}
\allowdisplaybreaks

\section{Introduction} \label{introduction}


In this paper, we analyze the behavior of the solutions of the Laplace equation with homogeneous Neumann boundary conditions
\begin{equation} \label{OPI}
\left\{
\begin{gathered}
- \Delta w^\epsilon + w^\epsilon = h^\epsilon
\quad \textrm{ in } R^\epsilon \\
\frac{\partial w^\epsilon}{\partial N^\epsilon} = 0
\quad \textrm{ on } \partial R^\epsilon
\end{gathered}
\right. 
\end{equation}
where $N^\epsilon$ is the unit outward normal to $\partial R^\epsilon$ and $h^\epsilon \in L^2(R^\epsilon)$. 
The domain $R^\epsilon$ is a  two dimensional thin domain which presents a highly oscillatory behavior at the boundary.  We will be able to consider two different types of thin domains, which will be clearly defined in Section 2.  To make the ideas clear we will refer in this introduction to the first type: assume $R^\eps$ is given as the region between two functions, that is, 
\begin{equation}\label{thin-intro}
R^\epsilon = \{ (x_1,x_2) \in \R^2 \; | \;  x_1 \in (0,1),  \;
 - \epsilon \, b(x_1) < x_2 < \epsilon \, G_\epsilon(x_1) \}\
 \end{equation}
where $b(\cdot)$ and $G_\eps(\cdot)$ are functions satisfying 
$
0<b_0<b(\cdot)<b_1
$, 
$
0 \leq G_\eps(\cdot)\leq G_1
$ 
for some fixed positive constants $b_0$, $b_1$ and $G_1$, independent of $\epsilon > 0$. 
Here, the function $b$, independent of $\epsilon$, defines the lower boundary of the thin domain, and the function $G_\epsilon$,
dependent on $\epsilon$, the upper boundary of $R^\epsilon$.
We will allow $G_\eps$ to present oscillations whose amplitude is larger than the order of compression of the thin domain. This is expressed by assuming that   
\begin{equation} \label{FGepI}
G_\eps(x)=G(x,x/\eps^\alpha),
\end{equation} 
for some positive constant $\alpha > 1$.  The function 
$G:(0,1) \times \R \to \R$ is a positive smooth function, with $y\to G(x,y)$ periodic in $y$ for fixed $x$ with period $l(x)$.

Let us observe that our assumptions includes the case where the function $G_\eps$ presents a purely periodic behavior, 
for instance, $G_\eps(x)=2+\sin(x/\eps^\alpha)$. But it also considers the case where the function 
$G_\eps$ defines a thin domain where the oscillations period, the amplitude and the profile vary with respect to $x \in (0,1)$.
Figures 1 and 2 below illustrate two examples of thin domains that we consider in this work.

\begin{figure}[htp] 
\centering \scalebox{0.55}{\includegraphics{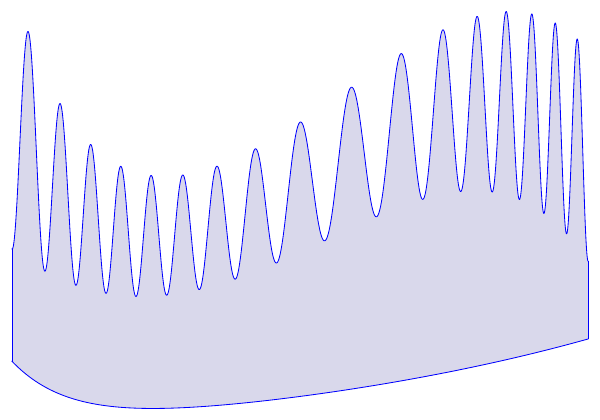}}
\caption{A thin domain with variable period, amplitude and profile.}
\end{figure}

\begin{figure}[htp]
\centering \scalebox{0.55}{\includegraphics{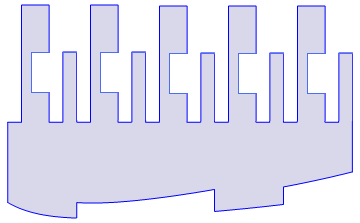}}
\caption{A comb-like thin domain.}
\label{typeII}
\end{figure}

Since the domain $R^\epsilon$ is thin, approaching the interval $(0,1)$, it is reasonable to expect that
the family of solutions will converge to a function of just one variable and that this function will satisfy
certain elliptic equation  in one dimension with some boundary conditions.

It is known that if the domain does not present oscillations, that is $G_\epsilon (x) = G(x)$, with $0<G_0\leq G(\cdot) \leq G_1$ and $b(\cdot)\equiv 0$ the 1-dimensional limiting problem is given by
\begin{equation} \label{thin-domain-hale-raugel}
\left\{
\begin{gathered}
-\frac{1}{G(x)}\Big(G(x) \, w_x\Big)_x + w = h, \hbox{ in } (0,1), \\
w_x(0) = w_x(1)=0
\end{gathered}
\right. 
\end{equation}
see for instance \cite{HR, R}.  
Also, if we consider $b \equiv 0$, $G_\eps(x)=G(x,x/\epsilon^\alpha)$ for some $0\leq\alpha<1$,
and if we assume that $G_\eps(\cdot)\to m(\cdot) \quad w-L^2(0,1)$ and $\frac{1}{G_\eps(\cdot)}\to k(\cdot) \quad w-L^2(0,1)$
(observe that $m(x)k(x)\geq 1$ a.e. and in general it is not true that $m(x)k(x)\equiv 1$), then the limit problem is 
$$
\left\{
\begin{gathered}
-\frac{1}{m(x)}\left(\frac{1}{k(x)} w_x\right)_x + w = h, \hbox{ in }(0,1)\\
w_x(0)=w_x(1)=0
\end{gathered}
\right. 
$$
see \cite{JA} for details.    
Note that this case contains the previous one, since we can recover (\ref{thin-domain-hale-raugel}) by taking $\alpha=0$.

Recently, we considered in \cite{AP2, AP} a class of oscillating thin domain that cover the case $\alpha=1$ with constant period $l$. 
Observe that this situation is very resonant since the height of the domain, the amplitude of the oscillations at the boundary and 
the period of the oscillations are of the same order $\eps$.  
The limit problem for this case is
\begin{equation} \label{GLP-cont}
\left\{
\begin{gathered}
-\frac{1}{s(x)}(r(x)w_{x})_x + w =  h(x), \quad x \in (0,1)\\
w'(0)=w'(1)=0
\end{gathered}
\right.
\end{equation} 
where 
$$
\begin{gathered}
r(x)= \int_{Y^*(x)} \Big\{ 1 - \frac{\partial X(x)}{\partial y_1}(y_1,y_2) \Big\} dy_1 dy_2, \qquad 
\end{gathered}
$$ 
$$s(x)={|Y^*(x)|}$$
and $X(x)$ is a convenient auxiliary harmonic function
defined in the representative basic cell $Y^*(x)$, which depends on $G(x,\cdot)$, $x \in (0,1)$, and 
it is given by 
$$
Y^*(x)= \{ (y_1,y_2) \in \R^2 \; | \; 0< y_1 < l, \quad 0 < y_2 < G(x,y_1) \}.
$$
The restricted case where the function $G_\eps(x)=G(x/\eps)$ for some $l$-periodic smooth function $G$ can be addressed by somehow standard techniques in homogenization theory, as developed in \cite{BLP, CP, SP}. 
We refer to \cite{ACPS} for a complete analysis of this case for a semilinear parabolic problem.

In this work, we are interested in addressing the case $\alpha>1$ in \eqref{FGepI}, 
where none of the techniques used to solve the previous ones apply. In particular,  we do not use any extension operator for
the convergence proof.
 Indeed, we will be able to show how the geometry of the boundary oscillations affect the limiting equation, see Theorem \ref{main}, Theorem \ref{main2}. See also Corollary \ref{periodic} for a very interesting interpretation of the limiting equation and to see how the geometry of the unit cell affects the limit equation in the case of periodic oscillations.

There are several works in the literature on partial differential equations dealing with the problem of  thin domains presenting oscillating boundaries. Among others, we may mention \cite{MP,MP2} who studied the asymptotic approximations of solutions to parabolic and elliptic problems in thin perforated domain with rapidly varying thickness, and  \cite{BGG,BGM} who consider nonlinear monotone problems in a multidomain with a highly oscillating boundary. 
In addiction to these, we also may cite the works \cite{AB,BZ, BFF}, in which the asymptotic description of nonlinearly elastic thin films with fast-oscillating profile was successfully obtained in a context of \emph{$\Gamma$-convergence}  \cite{Dal}.
In particular, we observe that the boundary perturbation studied in the papers \cite{AB,BFF} is related to the present one. Our goal here is allow much more complicated profiles for the oscillating thin domain obtaining the limit problem as well as its dependence with respect to the thin domain geometry. Indeed, we give an explicit relationship among the homogenized equation, the oscillation, the profile and thickness of the thin domain. Also, we are able to get strong convergence in $H^1$-norm when we compare the solutions of the limit problem and the perturbed one.

There are also some other works addressing the problem of the behavior of solutions of some partial differential equations when just the boundary of the domain presents an oscillatory behavior.   In this case,  the perturbation affects only at the boundary of the domain, that is, roughly speaking we have a fixed domain $\Omega$ and the perturbed domain is obtained by modifying part of its boundary with an oscillatory behavior. This is the case of \cite{CFP}, where the authors deal with  the Poission equation with Robin type boundary conditions in the oscillating part of the boundary.   In general, the differential equation is not affected by this perturbation but it is the boundary condition the one which is influenced by the oscillations. Some very interesting interplay among the geometry of the oscillations and the Robin boundary conditions is analyzed so that depending on this balance the limiting boundary condition may be of a different type.  Similar results are also encountered in  \cite{CLS} for the case of the Stokes operator and the Navier-Stokes equations with slip boundary conditions and \cite{ArrBru10} for the case of nonlinear elliptic equations with nonlinear boundary conditions (see also references in these paper).  
In the case of the present paper, the effect of the oscillations at the boundary is coupled with the effect of the domain being thin. The oscillations behavior affect completely at the equation, not just at the boundary conditions and the limit equation is in a lower dimensional space (1-D in this case) than the perturbed equations.

Finally, let us point out that thin structures with rough contours (thin rods,  plates or shells) or fluids filling out thin domains (lubrication) or even chemical diffusion process in the presence of grainy narrow strips (catalytic process) are very common in engineering and applied science.  The analysis of the properties of these structures and the processes taking place on them and understanding how the ``micro'' geometry of the thin structure affects the ``macro'' properties of the material is a very relevant issue in engineering  and material design.  In this respect,  being able to obtain the limiting equation of a prototype equation (like the Poisson equation) in different structures where the ``micro'' geometry is not necessarily smooth and being able to analyze how the different ``micro'' scales affects the limiting problem goes in this direction and will allow the study and understanding in more complicated situations.    
We refer to  \cite{Apl3,Apl1,Apl2} for some concrete more applied problems. 

%

This paper is organized as follows.
In Section \ref{basics} we give precise definitions of the two types of thin domains we are considering. One of them is the one described in this introduction. The other type is a ``comb-like'' thin domain, which can be visualized in Figure 2.    We also state clearly the two main results we prove, Theorem \ref{main} and Theorem \ref{main2}. 
The short Section \ref{basic-section} states a technical result which will be used later in the proof. 
In Section \ref{thin-domain-graphs} we analyze the type  of thin domains which are given as a region between two graphs as in \eqref{thin-intro}. 
In Section \ref{comb} we analyze the other type of thin domains, that we have denoted as a ``comb-like'' thin domain. 
Sections \ref{thin-domain-graphs} and \ref{comb} are dedicated to the proof of Theorems \ref{main} and \ref{main2} respectively.

We also would like to observe that although we will deal with Neumann boundary conditions,  
we may also impose different conditions in the lateral boundaries of the thin
domain $R^\eps$, while preserving the Neumann type boundary condition in the upper and lower boundary.  
Indeed, we may consider conditions of the Dirichlet type, $w^\eps=0$, or even Robin, $\frac{\partial w^\eps}{\partial N}+\beta w^\eps=0$ in the lateral
boundaries of the problem (\ref{OPI}).
The limit problem will preserve this boundary condition as a point condition.

\section{Basic facts, notation and main results}
\label{basics}

We will consider two different types of thin domains. One of them will be given as the region between the graphs of two functions and the other will consists of an autoreplicating structure with appropriate scaling rates which resembles a comb structure. We present now the main definitions, basic facts and results on both cases. 

\par\medskip\noindent{\bf Type I.  Thin domain as the region between two graphs}. 
Let us consider an one parameter family of functions $G_\eps: (0,1) \to [0,\infty)$, $\eps \in (0, \eps_0)$ for some $\eps_0>0$, 
and a function $b:(0,1) \mapsto (0,\infty)$. 
We will assume the following hypotheses on functions $b$ and $G_\eps$:
\begin{itemize}
\item[(H1)] There exist two positive constants $b_0$, $b_1$ such that $0 < b_0 \leq b(x) \leq b_1$ for all $x \in (0,1)$ and the function $b$ is piecewise $C^1$. 
\item[(H2)] The  functions $G_\eps(\cdot)$ are of 
the type $G_\eps(x)=G(x,x/\eps^\alpha)$, with $\alpha > 1$, where the function
\begin{equation}\label{def-G}
\begin{array}{rl}
G:[0,1]\times \R &\longrightarrow [0,+\infty) \\
 (x,y)&\longrightarrow G(x,y)
 \end{array}
 \end{equation}
is continuous in $x$, uniformly in the second variable $y$, (that is, for each $\eta > 0$, there exists
$\delta > 0$ such that $|G(x,y) - G(x',y)| \leq \eta$ for all $x$, $x' \in [0,1]$, $|x-x'| < \delta$, and $y \in \R$).
Moreover, we assume $G(x,y) \geq 0$ is periodic in $y$, with a period $l(x)$ that may depend on the first variable, that is, $G(x,y+l(x))=G(x,y)$. 
We also assume that $l(\cdot)$ is a continuous function  in $[0,1]$, with  $ 0 < L' \leq l(x) \leq L$ for all $x \in [0,1]$. 
\end{itemize}

We consider the highly oscillating thin domain $R^\epsilon$, which is given as the region between the graphs of 
the two functions $\eps b(\cdot)$ and $\eps G_\eps(\cdot)$, that is
$$
R^\epsilon = \{ (x_1,x_2) \in \R^2 \; | \;  x_1 \in (0,1),  \;
 - \epsilon \, b(x_1) < x_2 < \epsilon \, G_\epsilon(x_1) \}  
$$
and we investigate the behavior of the solutions of \eqref{OPI} as $\eps\to 0$.

Observe that since $\alpha > 1$, we have that the upper boundary of this thin domain presents a very high oscillatory behavior.
More precisely, the period of the oscillations is much smaller (order $\sim\eps^\alpha$) than the amplitude (order $\sim\eps$) and height of the thin domain (order $\sim\eps$). Figure 3 below gives us an example of a function $G$ in a bounded open set. 

\begin{figure}[h]
\centering \scalebox{0.5}{\includegraphics{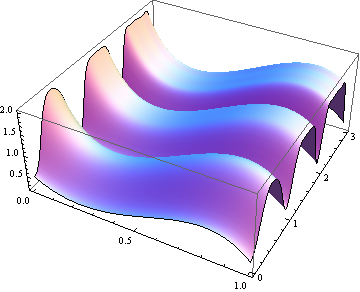}}
\caption{Graph of function $G$ in $(0,1) \times (0, 3 L)$.}
\end{figure}

To study the convergence of the solutions of (\ref{OPI}), 
we consider the equivalent linear elliptic problem
\begin{equation} \label{P}
\left\{
\begin{gathered}
 - \frac{\partial^2 u^\epsilon}{{\partial x_1}^2} - 
\frac{1}{\epsilon^2} \frac{\partial^2 u^\epsilon}{{\partial x_2}^2} + u^\epsilon = f^\epsilon
\quad \textrm{ in } \Omega^\epsilon \\
\frac{\partial u^\epsilon}{\partial x_1} \nu_1^\epsilon + \frac{1}{\epsilon^2} \frac{\partial u^\epsilon}{\partial x_2}\nu_2^\epsilon = 0
\quad \textrm{ on } \partial \Omega^\epsilon
\end{gathered}
\right.
\end{equation}
where $f^\eps\in L^2(\Omega^\eps)$  satisfies 
\begin{equation} \label{ESTF}
\| f^\epsilon \|_{L^2(\Omega^\epsilon)} \le C
\end{equation}
for some $C > 0$ independent of $\epsilon$, and now, $\nu^\eps=(\nu_1^\eps,\nu_2^\eps)$ is the outward
unit normal to $\partial\Omega^\eps$, and $\Omega^\epsilon \subset \R^2$ is a highly oscillating domain given by
\begin{equation} \label{domain}
\Omega^\epsilon = \{ (x_1,x_2) \in \R^2 \; | \;  x_1 \in (0,1), \; -b(x_1) < x_2 < G_\epsilon(x_1) \}.
\end{equation}
Note that the equivalence between (\ref{OPI}) and (\ref{P}) is easily obtained by changing the scale 
of the thin domain $R^\epsilon$ in the $y$-direction through the simple transformation $(x,y) \to (x, \epsilon y)$, 
(see \cite{JA,HR} for more details).
Thus, we have a domain which is not thin anymore but presents very wild oscillatory behavior at the top boundary, 
although the presence of a high diffusion coefficient in front of the derivative with respect the second variable decreases
the effect of the high oscillations.

It is worth also mentioning the works \cite{ABMG,BCh,CFP,DP} that analyze elliptic problems in oscillating domains related to $\Omega^\eps$ but the fact that in our case we have a very high diffusion in the $y$-direction makes the analysis and result different from theirs.  
In fact, here we are considering a situation in which a phenomenon described for a two-dimensional differential equation can be strictly approximated for one-dimensional one.

Now we are in condition to state our main result whose proof will be presented in section \ref{PofC}.

\begin{theorem} \label{main}
Assume that  $f^\epsilon \in L^2(\Omega^\epsilon)$ satisfies $\|f^\eps\|_{L^2(\Omega^\eps)}\leq C$ and the function $\hat f^\eps(x)=\int_{-b(x)}^{G_\eps(x)}f(x,y) \, dy$ satisfies that $\hat f^\eps\rightharpoonup \hat f$, w-$L^2(0,1)$.
Let $u^\epsilon$ be the unique solution of (\ref{P}) and $G_0$ be the function given by 
\begin{equation} \label{FG0T}
G_{0}(x) = \min_{y \in \R} G(x,y)\geq 0.
\end{equation}
 
Then, if  $u_0(x_1)$ is the unique 
weak solution of the Neumann problem
\begin{equation} \label{VFPDL-piecewise0}
\int_0^1 \Big\{ \Big( b(x) + G_0(x) \Big) \; u_x(x) \, \varphi_x(x) 
+ p(x) \, u(x) \, \varphi(x) \Big\} dx = \int_0^1 \hat f(x) \, \varphi \, dx, \quad \forall \varphi\in H^1(0,1)
\end{equation}
where $p(x)$ is the function defined as follows: 
$$p(x)=\frac{1}{l(x)}\int_0^{l(x)} \Big( b(x)+G(x,y) \Big) \, dy = b(x)+ \frac{1}{l(x)}\int_0^{l(x)} G(x,y) \, dy, \hbox{  for all }x\in (0,1),$$
we have 
$$
\| u^\epsilon-u_0\|_{L^2(\Omega^\eps)} \eto 0.
$$ 
Moreover, if we denote by $\Omega_0=\{ (x_1,x_2) \in \R^2 \; | \;  x_1 \in (0,1), \; -b(x_1) < x_2 < G_0(x_1) \}\subset \Omega_\eps$ then, 
$$u_\eps \to u_0 \quad s-H^1(\Omega_0)$$

\end{theorem}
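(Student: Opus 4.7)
\emph{Proof proposal.} The plan is to proceed in four stages: obtain uniform energy estimates and select a weak limit on the bulk region $\Omega_0$; pass to the limit in the weak formulation of \eqref{P} using an $x_1$-only test function; treat separately the contributions of the bulk $\Omega_0$ and of the oscillating ``fingers'' $\Omega^\epsilon\setminus\Omega_0$; and finally upgrade the convergence on $\Omega_0$ via an energy identity.

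Testing the weak form of \eqref{P} with $u^\epsilon$ itself and using $\|f^\epsilon\|_{L^2(\Omega^\epsilon)}\le C$ yields the uniform bounds $\|u^\epsilon\|_{L^2(\Omega^\epsilon)}+\|\partial_{x_1}u^\epsilon\|_{L^2(\Omega^\epsilon)}\le C$ and $\|\partial_{x_2}u^\epsilon\|_{L^2(\Omega^\epsilon)}\le C\epsilon$. Since $\Omega_0\subset\Omega^\epsilon$, the restriction $u^\epsilon|_{\Omega_0}$ is bounded in $H^1(\Omega_0)$, so along a subsequence $u^\epsilon\rightharpoonup u_0$ weakly in $H^1(\Omega_0)$ and strongly in $L^2(\Omega_0)$; since $\partial_{x_2}u_0=0$, the limit satisfies $u_0=u_0(x_1)$. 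Inserting any $\varphi=\varphi(x_1)\in H^1(0,1)$ (extended trivially) in the weak formulation of \eqref{P}, the $\partial_{x_2}$ term vanishes and
\[
\int_{\Omega^\epsilon}\bigl(\partial_{x_1}u^\epsilon\,\varphi'+u^\epsilon\varphi\bigr)dx=\int_0^1\hat f^\epsilon\varphi\,dx_1\longrightarrow\int_0^1\hat f\,\varphi\,dx_1.
\]
On the bulk part of $\Omega^\epsilon$ the weak convergence on $\Omega_0$ gives directly
\[
\int_{\Omega_0}\bigl(\partial_{x_1}u^\epsilon\,\varphi'+u^\epsilon\varphi\bigr)dx\longrightarrow\int_0^1\bigl(b+G_0\bigr)\bigl(u_0'\varphi'+u_0\varphi\bigr)dx.
\]

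The technical heart of the proof lies in analysing the remaining integral on the fingers $\Omega^\epsilon\setminus\Omega_0$. From $\|\partial_{x_2}u^\epsilon\|_{L^2(\Omega^\epsilon)}\le C\epsilon$ and the fundamental theorem of calculus in $x_2$ one obtains
\[
\|u^\epsilon(\cdot,\cdot)-u^\epsilon(\cdot,G_0(\cdot))\|_{L^2(\Omega^\epsilon\setminus\Omega_0)}\le C\epsilon,
\]
and by compactness of the trace $H^1(\Omega_0)\to L^2(\{x_2=G_0(x_1)\})$ the function $u^\epsilon(\cdot,G_0(\cdot))$ converges strongly to $u_0$ in $L^2(0,1)$. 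Combined with the weak convergence $G_\epsilon-G_0\rightharpoonup\frac{1}{l(\cdot)}\int_0^{l(\cdot)}G(\cdot,y)\,dy-G_0$ in $L^2(0,1)$ (the standard rapidly-oscillating result, whose use in the $x$-dependent period setting is justified by the technical lemma of Section \ref{basic-section} and hypothesis (H2)), this produces
\[
\int_{\Omega^\epsilon\setminus\Omega_0}u^\epsilon\varphi\,dx\longrightarrow\int_0^1 u_0\,\varphi\,\Bigl(\tfrac{1}{l(x)}\int_0^{l(x)}G(x,y)\,dy-G_0\Bigr)dx,
\]
which combined with the bulk mass term produces exactly the coefficient $p(x)$. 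The main obstacle, and the most delicate point, is to prove
\[
\int_{\Omega^\epsilon\setminus\Omega_0}\partial_{x_1}u^\epsilon\,\varphi'\,dx\longrightarrow 0,
\]
i.e.\ that the fingers transmit no $x_1$-flux in the limit. The natural route is to integrate by parts in $x_1$ inside each finger, using Leibniz's rule on the vertical slice $x_2\in(G_0(x_1),G_\epsilon(x_1))$: this produces an interior term that matches a piece already handled by the mass estimate, plus boundary contributions on $\{x_2=G_\epsilon(x_1)\}$ that should be tamed by combining the Neumann condition on the upper boundary (which pairs $\partial_{x_1}u^\epsilon$ with $\epsilon^{-2}\partial_{x_2}u^\epsilon$) with the smallness $\|\partial_{x_2}u^\epsilon\|_{L^2}\le C\epsilon$ and with the uniform-in-$x_1$ control of the profile coming from (H2).

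Assembling the pieces shows that $u_0$ satisfies \eqref{VFPDL-piecewise0} for every $\varphi\in H^1(0,1)$. Uniqueness of the limit problem upgrades subsequential convergence to convergence of the whole family. The $L^2(\Omega^\epsilon)$ statement $\|u^\epsilon-u_0\|_{L^2(\Omega^\epsilon)}\to 0$ then follows from the strong $L^2(\Omega_0)$ convergence and the finger estimate above. Finally, to obtain the strong $H^1(\Omega_0)$ convergence, I would exploit the energy identity: testing \eqref{P} with $u^\epsilon$ gives
\[
\int_{\Omega^\epsilon}|\partial_{x_1}u^\epsilon|^2\,dx+\epsilon^{-2}\|\partial_{x_2}u^\epsilon\|^2_{L^2(\Omega^\epsilon)}+\|u^\epsilon\|^2_{L^2(\Omega^\epsilon)}=\int_{\Omega^\epsilon}f^\epsilon u^\epsilon\,dx,
\]
and passing to the limit (using the results of the preceding steps to handle the right-hand side and the mass term) and comparing with the limit energy identity obtained by testing \eqref{VFPDL-piecewise0} with $u_0$, one gets $\int_{\Omega_0}|\partial_{x_1}u^\epsilon|^2\,dx\to\int_0^1(b+G_0)|u_0'|^2\,dx$, which combined with weak convergence in $H^1(\Omega_0)$ yields the desired strong convergence.
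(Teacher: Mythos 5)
Your overall architecture (a priori bounds, weak limit on $\Omega_0$, split the weak form into bulk and fingers, pass to the limit, energy identity for strong $H^1$ convergence) matches the paper's, and your treatment of the mass and right-hand-side terms is fine. The genuine gap is exactly where you flag it: the claim
\[
\int_{\Omega^\epsilon\setminus\Omega_0}\partial_{x_1}u^\epsilon\,\varphi'\,dx\longrightarrow 0,
\]
and the route you sketch to prove it does not work. You only test the equation with a function $\varphi(x_1)$ of $x_1$ alone, so all the information carried by the $\epsilon^{-2}\partial_{x_2}$ term is thrown away, and the only bound available for $\partial_{x_1}u^\epsilon$ on the fingers is $\|\partial_{x_1}u^\epsilon\|_{L^2(\Omega^\epsilon)}\le C$, which has no smallness. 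Integrating by parts in $x_1$ inside the fingers produces the boundary term $-G_\epsilon'(x_1)\,u^\epsilon(x_1,G_\epsilon(x_1))\,\varphi'(x_1)$, and $G_\epsilon'(x_1)=\partial_x G+\epsilon^{-\alpha}\partial_y G$ is of size $\epsilon^{-\alpha}$, so this term is enormous, not small. Neither the Neumann condition on the very rough curve $x_2=G_\epsilon(x_1)$ (whose arclength diverges) nor the bound $\|\partial_{x_2}u^\epsilon\|_{L^2}\le C\epsilon$ gives you control of this trace: the Neumann condition relates $\partial_{x_1}u^\epsilon$ and $\epsilon^{-2}\partial_{x_2}u^\epsilon$ pointwise on a set of exploding measure, and you have no usable compactness or strong convergence of traces along $\{x_2=G_\epsilon(x_1)\}$. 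There is also a secondary technicality you would face: $G_0=\min_y G(\cdot,y)$ is only continuous, not necessarily $C^1$, so the Leibniz differentiation of $\int_{G_0}^{G_\epsilon}$ is delicate and the compactness of the trace operator on the curve $\{x_2=G_0(x_1)\}$ that you invoke needs justification (the paper sidesteps this by taking traces on the flat line $x_2=0$).

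The paper's proof removes the obstacle by \emph{changing the test function rather than trying to estimate the flux}. In the oscillating region it does not use $\varphi(x_1)$; it uses, in each cell above a step-function approximation $\tilde G_0^\epsilon$ of $G_0$, the solution $X^\epsilon_n$ of the auxiliary cell problem \eqref{AUXSOL} with $X^\epsilon_n=\phi$ on the cell base and Neumann conditions elsewhere, glued to $\phi(x_1)$ below. Lemma \ref{basic-lemma} gives the crucial quantitative estimate \eqref{basic-estimate}: the $\epsilon$-weighted $H^1$-norm of $X^\epsilon_n$ on a rectangle of width $O(\epsilon^\alpha)$ and height $O(1)$ is $O(\epsilon^{(\alpha-1)/2}\|\phi'\|_{L^2})$, because the strong vertical diffusion $\epsilon^{-2}\partial_{x_2}^2$ flattens the solution to its boundary average almost immediately in $x_2$. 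Then the contribution of the fingers is
\[
\int_{\Omega^\epsilon_+}\Bigl\{\partial_{x_1}u^\epsilon\,\partial_{x_1}X^\epsilon+\epsilon^{-2}\partial_{x_2}u^\epsilon\,\partial_{x_2}X^\epsilon\Bigr\}\,dx,
\]
which by Cauchy--Schwarz with the $\epsilon$-weighted norms (estimate \eqref{ESTX2}) is $O(\epsilon^{(\alpha-1)/2})$, with no need to estimate $\partial_{x_1}u^\epsilon$ on the fingers or to touch the oscillating boundary. The remaining difference between the region below $\tilde G_0^\epsilon$ and $\Omega_0$ is handled by the uniform convergence $\|\tilde G_0^\epsilon-G_0\|_{L^\infty}\to 0$ (Lemma \ref{ESTINF}). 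Without some device of this kind — an adapted test function killing the gradient in the fingers, or an oscillating corrector — I do not see how to close the flux estimate, so the step you identify as delicate is where a new idea is needed, not just a careful computation.
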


\begin{remark}
Functions $b$ and $G_0$, defined in (H1) and \eqref{FG0T} respectively, are related to the part of the domain $\Omega^\epsilon$ that does not oscillate as the parameter $\epsilon$ goes to zero.
Indeed, if we assume that the period, the amplitude and the profile of the domain are constant with respect to $x \in (0,1)$, 
we get the nice result stated in the corollary below.
\end{remark}

\begin{corollary}\label{periodic}
If we have $G(x,y)=G(y)$ an $L$-periodic function with $\min_{y \in \R} G(y)=0$ and $b(x)=b$ a constant function, then, the homogenized limit is given by the equation with constant coefficients:
\begin{equation} \label{homogenized-periodic}
\left\{
\begin{gathered}
 - d u''+ u=f, \quad (0,1) \\
u'(0)=u'(1)=0\end{gathered}
\right.
\end{equation}
where the diffusion coefficient is given by 
$$d=\frac{b}{b+\frac{1}{L}\int_0^LG(y) \, dy}=\frac{Lb}{Lb+\int_0^LG(y) \, dy}=\frac{\hbox{Area of the non oscillating part of the unit cell}}{
\hbox{Total area of the unit cell}}.$$
\end{corollary}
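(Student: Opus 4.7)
The corollary is a direct specialization of Theorem \ref{main} to the case of constant $b$ and $x$-independent, $L$-periodic profile $G$. My plan is to substitute the simplifying hypotheses into the variational formulation \eqref{VFPDL-piecewise0} and rewrite the resulting equation in strong form.

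First, I would verify that the hypotheses (H1) and (H2) hold trivially: $b(x)\equiv b$ is piecewise $C^1$ with $0<b_0\le b\le b_1$ (take $b_0=b_1=b$), while $G(x,y)=G(y)$ is continuous jointly and $l(x)\equiv L$ is a constant period. Next, I would compute the coefficients appearing in Theorem \ref{main}: by the assumption $\min_{y\in\R}G(y)=0$, definition \eqref{FG0T} gives $G_0(x)\equiv 0$; hence $b(x)+G_0(x)\equiv b$ is constant, and
\[
p(x)=b+\frac{1}{L}\int_0^L G(y)\,dy =: p
\]
is also a constant.

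Substituting into \eqref{VFPDL-piecewise0}, the weak formulation for the limit $u_0$ reads
\[
\int_0^1 \bigl\{ b\, u_x\,\varphi_x + p\, u\, \varphi\bigr\}\,dx=\int_0^1\hat f\,\varphi\,dx,\qquad\forall\varphi\in H^1(0,1).
\]
Dividing both sides by $p>0$, this is precisely the weak formulation of the Neumann problem
\[
-\frac{b}{p}\,u''+u=\frac{\hat f}{p}\quad\text{in }(0,1),\qquad u'(0)=u'(1)=0,
\]
which matches \eqref{homogenized-periodic} with $d=b/p$ and $f=\hat f/p$.

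Finally, I would unpack the geometric interpretation of $d$. Multiplying numerator and denominator by $L$,
\[
d=\frac{Lb}{Lb+\int_0^L G(y)\,dy}.
\]
The reference unit cell has a non-oscillating rectangular part $[0,L]\times[-b,0]$ of area $Lb$ and an oscillating part $\{(y_1,y_2):0<y_1<L,\,0<y_2<G(y_1)\}$ of area $\int_0^L G(y)\,dy$, so the denominator is exactly the total area of the unit cell and the numerator the area of its non-oscillating portion, yielding the stated ratio. There is no substantive obstacle beyond the direct computation; the real work is already contained in Theorem \ref{main}.
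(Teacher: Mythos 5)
Your proposal is correct and is precisely the substitution argument one would expect: the paper states this corollary without a separate proof, leaving it as an immediate consequence of Theorem \ref{main}, and your computation of $G_0 \equiv 0$, $p \equiv b + \tfrac{1}{L}\int_0^L G$, and the division by $p$ to obtain $d = b/p$ supplies exactly that verification. The only small point worth making explicit is that the $f$ appearing in \eqref{homogenized-periodic} is the original right-hand side under the implicit assumption $f^\eps(x_1,x_2)=f(x_1)$ (cf.\ Remark \ref{remark-1}), in which case $\hat f = p f$ and your $\hat f/p$ reduces to $f$ as intended.
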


\par\bigskip\bigskip\noindent{\bf Type II.  Comb-like thin domain}. 
We consider now another interesting type of thin domain.  Let 
$R^\eps= \rm{Int} \left( \overline{R_-^\eps \cup  R^\eps_+} \right)$ 
where
$$R_-^\eps=\{ (x_1,x_2) \; | \;  0<x_1<1, -\eps b(x_1)<x_2<0\},$$ 
with $b$ given as in the previous case  (see hypothesis (H1)).  Consider also, 
$$R^\eps_+=\bigcup_{n=1}^{N_\eps}R^\eps_{n,+},$$
where $N_\eps$ is the largest integer number such that $N_\eps L\eps^\alpha\leq 1$ and 
$$R^\eps_{n,+}=\{ ((n-1)L\eps^\alpha+\eps^\alpha x_1,\eps x_2) \; | \; (x_1,x_2)\in Q_0\}$$
where $Q_0\subset (0,L)\times (0, G)$ is a fixed Lipschitz domain satisfying the following: 

\begin{itemize}
\item[(HQ)] $\partial Q_0\cap (\{0\}\times (0, G])=\emptyset$, $\partial Q_0\cap (\{L\}\times (0, G])=\emptyset$. Moreover, if $\Gamma_0 = \partial Q_0 \cap \{x_2=0\} $ and if we consider $e_1(Q_0)$ the first eigenvalue of the operator $-\Delta$ in $Q_0$ with homogeneous Dirichlet boundary condition in $\Gamma_0$ and homogeneous Neumann boundary condition in $\partial Q_0\setminus \Gamma_0$, then $e_1(Q_0)>0$. 
\end{itemize}

Observe that if $Q_0$ is connected and $\Gamma_0\ne \emptyset$ then (HQ) is satisfied. But there are cases where $Q_0$ is disconnected ant still (HQ) holds, see Figure \ref{typeII}.

As we have done in the previous case, let us define $\Omega^\eps=\{(x_1,x_2) \; | \; (x_1,\eps x_2)\in R^\eps\}$ so that,

$$\Omega^\eps= \rm{Int} \left( \overline{\Omega_- \cup  \Omega^\eps_+} \right),$$
$$\Omega_-=\{ (x_1,x_2) \; | \;  0<x_1<1, -b(x_1)<x_2<0\},$$ 
$$\Omega^\eps_+=\cup_{n=1}^{N_\eps}\Omega^\eps_{n,+},$$
where 
$$\Omega^\eps_{n,+}=\{ ((n-1)L\eps^\alpha+\eps^\alpha x_1,x_2) \; | \; (x_1,x_2)\in Q_0\}.$$
We also consider the equivalent linear elliptic problem
\begin{equation} \label{P-typeII}
\left\{
\begin{gathered}
 - \frac{\partial^2 u^\epsilon}{{\partial x_1}^2} - 
\frac{1}{\epsilon^2} \frac{\partial^2 u^\epsilon}{{\partial x_2}^2} + u^\epsilon = f^\epsilon
\quad \textrm{ in } \Omega^\epsilon \\
\frac{\partial u^\epsilon}{\partial x_1} \nu_1^\epsilon + \frac{1}{\epsilon^2} \frac{\partial u^\epsilon}{\partial x_2}\nu_2^\epsilon = 0
\quad \textrm{ on } \partial \Omega^\epsilon
\end{gathered}
\right.
\end{equation}
Under this conditions, we may get the following result.

\begin{theorem} \label{main2}
Let $u^\epsilon$ be the unique solution of (\ref{P}).
Assume that  $f^\epsilon \in L^2(\Omega^\epsilon)$ satisfies $\|f^\eps\|_{L^2(\Omega^\eps)}\leq C$
 and the function $\hat f^\eps(x)=\int_{S^\eps(x)}f^\eps(x,y) \, dy$ satisfies that $\hat f^\eps\rightharpoonup \hat f$, w-$L^2(0,1)$ where  $S^\eps(x)=\{y \; | \; (x,y)\in \Omega^\eps\}$, that is, the section of the domain $\Omega^\eps$ at the point $x\in (0,1)$. 
 
Then, if  $u_0(x_1)$ is the unique 
weak solution of the Neumann problem
$$
\int_0^1 \Big\{ b(x) \; u_x(x) \, \varphi_x(x) 
+ q(x) \, u(x) \, \varphi(x) \Big\} dx =  \int_{0}^1  \hat f(x) \, \varphi \, dx, \quad \forall \varphi\in H^1(0,1)
$$
where $q(x)$ is the function given by 
$$
q(x) = \frac{|Q_0|}{L} + b(x_1) \quad \forall x_1 \in (0,1),
$$
we have $$
\| u^\epsilon-u_0\|_{L^2(\Omega^\eps)} \eto 0.
$$ 
Moreover,
 $$u_\eps \to u_0 \quad s-H^1(\Omega_-).$$ 

\end{theorem}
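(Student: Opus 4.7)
The plan is to mimic the approach to Theorem \ref{main}: derive uniform energy estimates, identify the weak limit on the $\eps$-independent part $\Omega_-$, analyze the teeth by rescaling, and pass to the limit in the variational formulation using a test function that is constant in $x_2$. The decisive new ingredient, relative to the graph case, is hypothesis (HQ), which furnishes the Poincar\'e-type inequality on $Q_0$ needed to pin down $u^\eps$ on each tooth by its value at the base.

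Testing the weak form of \eqref{P-typeII} against $u^\eps$ itself and using \eqref{ESTF} yields
$$
\|u^\eps_{x_1}\|^2_{L^2(\Omega^\eps)}+\frac{1}{\eps^2}\|u^\eps_{x_2}\|^2_{L^2(\Omega^\eps)}+\|u^\eps\|^2_{L^2(\Omega^\eps)}\le C.
$$
Since $\Omega_-$ is independent of $\eps$ and $\|u^\eps_{x_2}\|_{L^2(\Omega_-)}=O(\eps)$, a subsequence satisfies $u^\eps\rightharpoonup u_0$ weakly in $H^1(\Omega_-)$ with $u_0$ independent of $x_2$, so $u_0\in H^1(0,1)$. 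Rescaling each tooth via $x_1=(n-1)L\eps^\alpha+\eps^\alpha y_1$, $x_2=y_2$ turns $\Omega^\eps_{n,+}$ into the fixed cell $Q_0$, and the resulting $\tilde u^{(n)}$ satisfies
$$
\sum_{n=1}^{N_\eps}\|\tilde u^{(n)}_{y_1}\|^2_{L^2(Q_0)}=\eps^\alpha\|u^\eps_{x_1}\|^2_{L^2(\Omega^\eps_+)}\le C\eps^\alpha,
$$
so on average $\tilde u^{(n)}$ is close to a function of $y_2$ alone. Combining this estimate with (HQ) and with the matching of the trace of $\tilde u^{(n)}$ on $\Gamma_0$ against that of $u^\eps|_{\Omega_-}$ on $\{x_2=0\}$, I would deduce that on each tooth $u^\eps$ is $L^2$-close to the constant value $u_0(x_n^*)$, where $x_n^*$ is any chosen point in the $n$-th base.

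I would then take $\Phi(x_1,x_2)=\varphi(x_1)$, $\varphi\in H^1(0,1)$, as test function. The source $\int_{\Omega^\eps}f^\eps\varphi$ converges to $\int_0^1\hat f\,\varphi\,dx$ by the hypothesis on $\hat f^\eps$. The mass term splits over $\Omega_-$ and $\Omega^\eps_+$: weak convergence on $\Omega_-$ yields $\int_0^1 b(x)u_0\varphi\,dx$, while the tooth approximation together with a Riemann-sum argument of step $L\eps^\alpha$ yields $\int_0^1\frac{|Q_0|}{L}u_0\varphi\,dx$, combining into $\int_0^1 q(x)u_0\varphi\,dx$. The gradient term on $\Omega_-$ passes to $\int_0^1 b(x)u_0'\varphi'\,dx$ by weak $H^1(\Omega_-)$-convergence. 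Since the limit equation determines $u_0$ uniquely, the whole sequence converges; strong $L^2(\Omega^\eps)$-convergence is a byproduct of the tooth approximation, and strong $H^1(\Omega_-)$-convergence follows by the usual convergence-of-energies argument, testing the limit problem against $u_0$.

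The hard part is showing that the tooth gradient term $\int_{\Omega^\eps_+}u^\eps_{x_1}\varphi'\,dx$ vanishes in the limit. A direct Cauchy--Schwarz estimate is inconclusive, since both $\|u^\eps_{x_1}\|_{L^2(\Omega^\eps_+)}$ and $|\Omega^\eps_+|$ are of order one. The mechanism is that on each tooth $u^\eps$ is almost $x_1$-constant, so the integrated flux $\int_{\Omega^\eps_{n,+}}u^\eps_{x_1}\,dx$ coincides with $\int_{Q_0}\tilde u^{(n)}_{y_1}\,dy$ by divergence (using $\nu_1\equiv 0$ on $\Gamma_n^\eps$), and its weighted sum against $\varphi'(x_n^*)$ must be forced to zero. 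I expect to exploit that the constant part of $\tilde u^{(n)}$ contributes nothing, since $\int_{\partial Q_0}\nu^y_1\,ds=0$, and then combine the Poincar\'e inequality on $Q_0$ furnished by (HQ), the rescaled Neumann condition $\nu^y_1\tilde u^{(n)}_{y_1}+\eps^{2\alpha-2}\nu^y_2\tilde u^{(n)}_{y_2}=0$ on $\partial^+Q_0$, and the smoothness of $\varphi$ to extract the cancellation that closes the proof.
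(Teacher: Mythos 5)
Your overall scaffolding (energy estimates, weak limit on the $\eps$-independent part $\Omega_-$, a Poincar\'e-type mechanism from (HQ) to control the teeth, pass to the limit, energy convergence for strong $H^1$) matches the paper's, and your treatment of the mass and source terms would go through. However, there is a genuine gap in the decisive step, and it is precisely the one you flag as "the hard part."

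The crucial divergence from the paper is your choice of test function $\Phi(x_1,x_2)=\varphi(x_1)$. With that choice the term $\int_{\Omega^\eps_+}u^\eps_{x_1}\varphi'\,dx$ has both factors of size $O(1)$, and your sketch does not show it vanishes. After rescaling each tooth to $Q_0$ you obtain $\sum_n\|\tilde u^{(n)}_{y_1}\|^2_{L^2(Q_0)}\le C\eps^\alpha$, and then Cauchy--Schwarz over $n$ against $\varphi'(x_n^*)$, with $N_\eps\sim\eps^{-\alpha}$ summands, only gives a bound $O(1)$, not $o(1)$. The divergence-theorem cancellation $\int_{\partial Q_0}\nu^y_1\,ds=0$ reduces matters to a trace fluctuation of $\tilde u^{(n)}$, but controlling that trace requires full $H^1(Q_0)$ bounds on $\tilde u^{(n)}$, and the $y_2$-derivative scales badly: $\sum_n\|\tilde u^{(n)}_{y_2}\|^2_{L^2(Q_0)}=\eps^{-\alpha}\|u^\eps_{x_2}\|^2_{L^2(\Omega^\eps_+)}=O(\eps^{2-\alpha})$, which diverges for $\alpha>2$. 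Also note that the boundary condition you write, $\nu^y_1\tilde u^{(n)}_{y_1}+\eps^{2\alpha-2}\nu^y_2\tilde u^{(n)}_{y_2}=0$, is a Neumann-type relation between normal derivatives; it does not directly control the trace of $\tilde u^{(n)}$ itself, which is what $\int_{\partial Q_0}\tilde u^{(n)}\nu^y_1\,ds$ needs. So the "cancellation that closes the proof" is not extracted, and it is not clear your route can be completed without essentially rebuilding the idea you skipped.

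The paper avoids this entirely with a different test function: on $\Omega^\eps_+$ it does not use $\varphi(x_1)$ but rather the solution $X^\eps_n$ of an auxiliary problem (\eqref{AUXSOLG}) in the rectangle $Q^\eps_n$ above each tooth, with boundary data $\phi(x_1)$ on the base. Lemma \ref{basic-lemma}, which is the technical core of Section \ref{basic-section}, then gives $\|\partial_{x_1}\varphi^\eps\|^2_{L^2(\Omega^\eps_+)}+\eps^{-2}\|\partial_{x_2}\varphi^\eps\|^2_{L^2(\Omega^\eps_+)}\le C\eps^{\alpha-1}\|\phi'\|^2_{L^2(0,1)}$, so the entire gradient integral over $\Omega^\eps_+$ vanishes by a single Cauchy--Schwarz in the $\eps$-weighted energy norm, without needing any per-tooth cancellation. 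This construction is the idea missing from your proposal. One further remark: the paper's argument for $\|u^\eps-u_0\|_{L^2(\Omega^\eps)}\to 0$ is a symmetrization $\hat u^\eps$ plus a Rayleigh-quotient contradiction in which (HQ) forces the relevant first eigenvalue to be $\ge\eps^{-2}e_1(Q_0)\to\infty$; your "tooth approximation" is in the same spirit but is too vague to assess, and you should be careful there as well, since the rescaled $y_2$-gradient is not uniformly bounded.
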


\section{An important estimate} \label{basic-section}

In this section we show several basic estimates on the solutions of certain elliptic pde's posed in rectangles of the type  
$$Q_\eps=\{ (x,y) \in \R^2 \; | \;  -\eps^\alpha<x<\eps^\alpha, \, 0<y<1\}$$ 
with $\alpha>1$.  
As a matter of fact, 
for $u_0(\cdot)\in H^1(-\eps^\alpha,\eps^\alpha)$, we define the function $u^\eps(x,y)$  as the unique solution of
\begin{equation} \label{P-basic}
\left\{
\begin{gathered}
 - \frac{\partial^2 u^\epsilon}{{\partial x}^2} - 
\frac{1}{\epsilon^2} \frac{\partial^2 u^\epsilon}{{\partial y}^2}= 0
\quad \textrm{ in } Q_\epsilon, \\
\qquad u(x,0)=u_0(x),\quad  \textrm{ on } \Gamma_\eps,\\
\frac{\partial u}{\partial \nu}=0,\quad  \textrm{ on } \partial Q_\eps\setminus \Gamma_\eps
\end{gathered}
\right.
\end{equation}
where $\nu$ is the outward unit normal to $\partial Q_\eps$ and 
$$
\Gamma_\eps = \{ (x,0) \in \R^2 \, | \, -\eps^\alpha<x<\eps^\alpha \}.
$$

We have the following,

\begin{lemma}\label{basic-lemma}
With the notations above, if we denote by $\bar u_0$ the average of $u_0$ in $\Gamma_\eps$, that is 
\begin{equation}\label{u-bar}
\bar u_0=\frac{1}{2\eps^\alpha}\int_{-\eps^\alpha}^{\eps^\alpha}u_0(x) \, dx,
\end{equation}
then, there exists a constant $C$,  independent of $\eps$ and $u_0$, such that 
\begin{equation}\label{estimate-L2-in-x}
\int_{-\eps^\alpha}^{\eps^\alpha}|u^\eps(x,y)-\bar u_0|^2 \, dx \leq C \exp\left\{-\frac{2y\pi}{\eps^{\alpha-1}}\right\} \|u_0\|_{L^2(-\eps^\alpha,\eps^\alpha)}^2
\end{equation}
\begin{equation}\label{estimate-L2}
\int_0^1\int_{-\eps^\alpha}^{\eps^\alpha}|u(x,y)-\bar u_0|^2 \, dxdy \leq C\eps^{\alpha-1} \|u_0\|_{L^2(-\eps^\alpha,\eps^\alpha)}^2
\end{equation}
\end{lemma}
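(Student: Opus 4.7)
\textbf{Proof plan for Lemma \ref{basic-lemma}.} The plan is separation of variables, exploiting the fact that $Q_\eps$ is a tensor product and that the lateral boundary conditions at $x=\pm\eps^\alpha$ are homogeneous Neumann. I would diagonalize $-\partial_{xx}$ on $(-\eps^\alpha,\eps^\alpha)$ in the Neumann cosine basis
$$\phi_0\equiv 1,\qquad \phi_n(x)=\cos\!\left(\frac{n\pi(x+\eps^\alpha)}{2\eps^\alpha}\right),\quad n\geq 1,$$
with eigenvalues $\lambda_n^2=(n\pi/(2\eps^\alpha))^2$. Expanding $u_0=\sum_n c_n\phi_n$ and $u^\eps(x,y)=\sum_n a_n(y)\phi_n(x)$, the key observation is that the zeroth Fourier coefficient is exactly $c_0=\bar u_0$, the mean defined in \eqref{u-bar}.

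Inserting the expansion into \eqref{P-basic} uncouples the system into ODEs $a_n''(y)=\eps^2\lambda_n^2\, a_n(y)$ on $(0,1)$ with $a_n(0)=c_n$ and $a_n'(1)=0$, so $a_0\equiv\bar u_0$ and, for $n\geq 1$,
$$a_n(y)=c_n\,\frac{\cosh(\mu_n(1-y))}{\cosh(\mu_n)},\qquad \mu_n:=\eps\lambda_n=\frac{n\pi}{2\eps^{\alpha-1}}.$$
The elementary inequality $\cosh(\mu(1-y))/\cosh(\mu)\leq 2e^{-\mu y}$, valid for $\mu>0$ and $y\in[0,1]$, combined with the monotonicity $\mu_n\geq\mu_1$, then yields $|a_n(y)|\leq 2|c_n|e^{-\mu_1 y}$ uniformly in $n\geq 1$.

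Parseval in $x$, noting that $u^\eps-\bar u_0=\sum_{n\geq 1}a_n(y)\phi_n(x)$ is the projection onto the orthogonal complement of the constants and that $\|\phi_n\|^2_{L^2(-\eps^\alpha,\eps^\alpha)}=\eps^\alpha$ for $n\geq 1$, delivers
$$\int_{-\eps^\alpha}^{\eps^\alpha}|u^\eps(x,y)-\bar u_0|^2\,dx=\eps^\alpha\sum_{n\geq 1}|a_n(y)|^2\leq C\,e^{-2\mu_1 y}\,\|u_0\|_{L^2(-\eps^\alpha,\eps^\alpha)}^2,$$
which is the content of \eqref{estimate-L2-in-x} (the effective decay rate I obtain is $2\mu_1=\pi/\eps^{\alpha-1}$; any mismatch with the explicit numerical factor in the statement can be absorbed into $C$ and is irrelevant for the later use). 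Estimate \eqref{estimate-L2} then follows at once by integrating in $y$ and using $\int_0^1 e^{-2\mu_1 y}\,dy\leq(2\mu_1)^{-1}=\eps^{\alpha-1}/\pi$. I do not foresee a genuine obstacle: the only conceptual point is that thinness of $Q_\eps$ in $x$ (width $2\eps^\alpha$) creates a spectral gap of order $\eps^{-2\alpha}$ for $-\partial_{xx}$, and after absorbing the coefficient $\eps^{-2}$ from the equation one is left with an effective exponential decay rate $\sim\eps^{-(\alpha-1)}$ in $y$, which drives the exponential convergence of $u^\eps$ to its $x$-average.
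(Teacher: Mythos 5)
Your proposal is correct and follows essentially the same route as the paper: separation of variables in the rectangle $Q_\eps$, explicit Fourier representation of the solution, and Parseval's identity in the $x$-variable. One point worth flagging, because you touch on it yourself: the basis you write, $\phi_n(x)=\cos\!\big(n\pi(x+\eps^\alpha)/(2\eps^\alpha)\big)$, is the genuine complete Neumann eigenbasis on $(-\eps^\alpha,\eps^\alpha)$, whereas the paper's displayed basis $\varphi_n^\eps(x)=\eps^{-\alpha/2}\cos(n\pi x/\eps^\alpha)$ spans only the even modes and therefore carries a first nonzero eigenvalue twice as large. As a consequence the decay rate you obtain, $2\mu_1=\pi/\eps^{\alpha-1}$, is the correct one, not the $2\pi/\eps^{\alpha-1}$ stated in \eqref{estimate-L2-in-x}. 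Contrary to your remark, a change in the exponential \emph{rate} cannot be absorbed into the multiplicative constant $C$; but this is harmless here because the quantity actually used downstream is the $\eps^{\alpha-1}$ scaling in \eqref{estimate-L2}, which is unaffected. Your auxiliary inequality $\cosh(\mu(1-y))/\cosh(\mu)\leq 2e^{-\mu y}$ and the Parseval bookkeeping (with $\|\phi_n\|^2_{L^2}=\eps^\alpha$ for $n\geq 1$) are both correct.
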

and 
\begin{equation}\label{basic-estimate}
\left\|\frac{\partial u}{\partial x}\right\|_{L^2(Q_\eps)}^2+\frac{1}{\eps^2}\left\|\frac{\partial u}{\partial y}\right\|^2_{L^2(Q_\eps)}\leq C \eps^{\alpha -1}
\left\|\frac{\partial u_0}{\partial x}\right\|_{L^2(-\eps^\alpha,\eps^\alpha)}^2.
\end{equation}

\begin{proof}  The proof of this result is  based in the known fact that the solution of the problem above can be found explicitly and admits a Fourier decomposition of the form 
\begin{equation}\label{solution}
u^\eps(x,y)=\frac{1}{2\eps^\alpha}\int_{-\eps^\alpha}^{\eps^\alpha} u_0(x)dx+ \sum_{  k=1  }^\infty (u_0,\varphi_n^\eps)\varphi_n^\eps(x) \frac{\cosh(\frac{n\pi(1-y)}{\eps^{\alpha-1}})}{\cosh(\frac{n\pi}{\eps^{\alpha-1}})}
\end{equation}
where $\varphi_n^\eps(x)=\eps^{-\alpha/2}\cos(\frac{n\pi x}{\eps^\alpha}),$ $n=1,2, \ldots,$ and  $(u_0,\varphi_n^\eps)=(u_0,\varphi_n^\eps)_{L^2(-\eps^\alpha, \eps^\alpha)}$. \end{proof}

\begin{remark} Observe that in particular, estimate \eqref{basic-estimate} implies  that 
$$\min_{u \in V}
\left\{ \left\|\frac{\partial u}{\partial x}\right\|_{L^2(Q_\eps)}^2+\frac{1}{\eps^2}\left\|\frac{\partial u}{\partial y}\right\|^2_{L^2(Q_\eps)}\right\} \leq C \eps^{\alpha -1}
\left\|\frac{\partial u_0}{\partial x}\right\|_{L^2(-\eps^\alpha,\eps^\alpha)}^2$$
where
$
V = \{ u \in H^1(Q_\eps) \, | \, u = u_0 \textrm{ in } \Gamma_\eps \}.
$
\end{remark}

\section{Thin domains as a region between graphs}
\label{thin-domain-graphs}

In this section we consider Type I thin domains and provide a proof of Theorem  \ref{main}.

We will start analyzing in detail the structure of the domain $\Omega^\eps$ as a preparation for the proof of our result. 

\subsection{The one parameter family $G^\eps$}
In this subsection we obtain some properties and a convenient approximation to the parameter family $G_\epsilon$ that we will use
in the proof of the main result Theorem \ref{main}.

From $(H2)$ we have that there exists a positive constant $G_1$ such that 
\begin{equation} \label{HG}
\begin{gathered}
0 \le G_\epsilon(x) \le G_1, \quad  \forall x\in (0,1), \quad \forall \eps\in (0,\eps_0).
\end{gathered}
\end{equation}
Moreover, for each $x\in [0,1]$, we consider the function  
\begin{equation} \label{FG0}
G_{0}(x) = \min_{y \in \R} G(x,y)\geq 0.
\end{equation}
We show that $G_0(\cdot)$ is  a continuous function in $[0,1]$. 
Indeed, we will prove that
\begin{equation} \label{CONTG0}
| G_0(x) - G_0(x') | \leq \sup_{y \in \R} | G(x,y) - G(x',y) | \quad \forall x, x' \in [0,1].
\end{equation}
Consequently, the continuity of $G_0$ follows from the uniform continuity of $G$ in $y$ and inequality (\ref{CONTG0}).

Thus, let us prove (\ref{CONTG0}). Given $x$ and $x' \in [0,1]$, there exist $y(x)$ and $y(x') \in \R$ such that 
$G_0(x) = G(x, y(x))$ and $G_0(x') = G(x', y(x'))$. We have
\begin{equation} \label{eqG01}
G_0(x) - G_0(x') = G_0(x) - G(x, y(x')) + G(x, y(x')) - G(x', y(x')) \leq G(x, y(x')) - G(x', y(x')).
\end{equation}
In a completely symmetric way we get 
\begin{equation} \label{eqG02}
G_0(x') - G_0(x) \leq G(x', y(x)) - G(x, y(x)).
\end{equation}
Consequently, we obtain (\ref{CONTG0}) from (\ref{eqG01}) and (\ref{eqG02}).

Recalling that we denote by $N_\eps$ the largest integer number such that $N_\eps L\eps^{\alpha}\leq 1$, where $L$ is given in hypothesis (H2), we define 
\begin{equation} \label{eqG00}
G_{n,\eps}=\min_{x\in [(n-1)L \eps^{\alpha}, nL \eps^{\alpha}]} G\left(x,\frac{x}{\eps^\alpha}\right), \quad n=1,2\ldots, N_\eps
\end{equation}
and $\gamma_{n,\eps}\in [(n-1)L \eps^{\alpha}, nL \eps^{\alpha}]$ a point where the minimum (\ref{eqG00}) is attained, 
that is, $G(\gamma_{n,\eps},\frac{ \gamma_{n,\eps}}{\eps^\alpha})=G_{n,\eps}$. Observe that $\gamma_{n,\eps}$ does not need to be uniquely defined.
We also denote by $\gamma_{0,\eps}=0$ and $\gamma_{ N_\eps+1,\eps}=1$.

Note that the set
\begin{equation} \label{PIE}
\{ \gamma_{0,\eps}, \gamma_{1,\eps}, ..., \gamma_{N_\eps + 1,\eps} \}
\end{equation}
defines a partition for the unit interval $[0,1]$.
Also, we have by definition that the segments 
$$
\{ (\gamma_{n,\eps}, x_2) \; | \; G_{n,\eps}<x_2<G_1\}\cap \Omega^\eps=\emptyset,
$$ 
for all $n=1,2,\ldots, N_\eps$. 
 
Consider also the step function 
\begin{equation} \label{PFG}
\tilde G_{0}^\eps(x)=
\left\{
\begin{array} {ll}
 G_{1,\eps}, &x\in  [0, \gamma_{1,\eps}] \\ 
  \max\{G_{n,\eps},G_{n+1,\eps}\}, & x\in  [\gamma_{n,\eps}, \gamma_{n+1,\eps}],n=1,2\ldots, N_\eps - 1   \\
G_{N_\eps,\eps} , & x\in  [\gamma_{N_\eps,\eps}, 1]
\end{array}
\right. .
\end{equation}

\begin{lemma} \label{ESTINF}
We have 
$$
\| G_0- \tilde G^\eps_0\|_{L^\infty(0,1)} \to 0 \quad \textrm{ as } \eps \to 0.
$$
\end{lemma}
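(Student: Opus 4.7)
The plan is to show that each piece of the step function $\tilde G_0^\eps$ lies uniformly close to $G_0$ by a local comparison at the cell scale $L\eps^\alpha$. First I would fix $\eta>0$ and extract $\delta_1,\delta_2>0$ from the two continuity properties available: the uniform-in-$y$ continuity of $G(\cdot,y)$ in the first variable provided by (H2), and the continuity of $G_0$ on $[0,1]$ established in (\ref{CONTG0}). Then I would choose $\eps_0$ so small that $2L\eps^\alpha<\min(\delta_1,\delta_2)$ for every $\eps<\eps_0$; this forces every argument shift appearing below to stay inside the tolerance set by both continuities.

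Next, for a fixed $x\in(0,1)$, I would locate the piece $[\gamma_{n,\eps},\gamma_{n+1,\eps}]$ of the partition (\ref{PIE}) that contains $x$ and analyse the at most two values $G_{m,\eps}$, $m\in\{n,n+1\}$, appearing in the definition of $\tilde G_0^\eps(x)$. The lower estimate is immediate: by the definition of $G_0$ and the continuity of $G_0$, $G_{m,\eps}=G(\gamma_{m,\eps},\gamma_{m,\eps}/\eps^\alpha)\geq G_0(\gamma_{m,\eps})\geq G_0(x)-\eta/2$. For the upper estimate I would freeze the first slot at $x^*:=\gamma_{m,\eps}$ inside the cell $I_m:=[(m-1)L\eps^\alpha,mL\eps^\alpha]$ and, via the uniform continuity of $G$ in $x$, replace $G(x',x'/\eps^\alpha)$ by $G(x^*,x'/\eps^\alpha)$ at a uniform cost of at most $\eta/2$.

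The crucial step — and the one I regard as the main obstacle worth flagging — is the observation that as $x'$ ranges over the cell $I_m$ the variable $y'=x'/\eps^\alpha$ sweeps an interval of length exactly $L$. Since $l(x^*)\leq L$ by hypothesis (H2), this interval contains at least one full period of the $l(x^*)$-periodic map $y\mapsto G(x^*,y)$, so $\min_{x'\in I_m} G(x^*,x'/\eps^\alpha)=G_0(x^*)$. It is precisely this geometric fact — that the cell width has been tuned to the \emph{maximum} period — that converts a local minimum in $x'$ into the global minimum $G_0$ of the profile. Combining with the previous reduction and with $|G_0(x^*)-G_0(x)|<\eta/2$ gives $G_{m,\eps}\leq G_0(x)+\eta$, and together with the lower bound, $|G_{m,\eps}-G_0(x)|\leq\eta$.

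Finally, since $\tilde G_0^\eps(x)$ is a maximum of at most two values $G_{m,\eps}$, each within $\eta$ of $G_0(x)$, I would conclude $|\tilde G_0^\eps(x)-G_0(x)|\leq\eta$, handling the endpoint pieces $[0,\gamma_{1,\eps}]$ and $[\gamma_{N_\eps,\eps},1]$ by the same argument and noting that their lengths are at most $2L\eps^\alpha$ so that the distance estimate $|x-\gamma_{1,\eps}|<2L\eps^\alpha$ (resp.\ $|x-\gamma_{N_\eps,\eps}|<2L\eps^\alpha$) still applies. Taking the supremum in $x$ yields $\|\tilde G_0^\eps-G_0\|_{L^\infty(0,1)}\leq\eta$, and since $\eta$ is arbitrary the conclusion follows as $\eps\to 0$.
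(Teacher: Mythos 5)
Your proof is correct and follows essentially the same route as the paper's: the same two continuity ingredients (uniform-in-$y$ continuity of $G$ in $x$, and the resulting continuity of $G_0$), and the same decisive geometric observation that the rescaled cell $\{x'/\eps^\alpha : x' \in I_m\}$ has length $L \geq l(x^*)$ and hence captures a full period of $G(x^*,\cdot)$, so that $\min_{x' \in I_m} G(x^*, x'/\eps^\alpha) = G_0(x^*)$. The paper phrases this as producing an explicit point $y + k\,l(\gamma_{n,\eps})$ in the relevant range and organizes the estimate as a signed decomposition rather than a two-sided bound, but the substance is identical.
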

\begin{proof}
It follows from $(H2)$ and (\ref{CONTG0}) that, for each $\eta > 0$, there exists $\eps_0 > 0$ such that 
\begin{equation} \label{eqGU}
|G(x,y) - G(x',y)|<\eta, |G_0(x) - G_0(x')|<\eta, \quad \forall | x - x'|< 2 \, \eps_0^\alpha \, L,\, \forall y \in \R
\end{equation}
Now, for all $x \in [\gamma_{n,\eps}, \gamma_{n+1,\eps}]$ we have
$$
\tilde G^\eps_0(x) - G_0(x) = \max \left\{ G_{n,\eps}, G_{n+1,\eps} \right\} - G_0(x).
$$
Without loss of generality, we may assume $\tilde G^\eps_0(x) = G_{n,\eps}$, that is, $G_{n,\eps} \geq G_{n+1,\eps}$. 
Thus,
\begin{eqnarray} \label{eqG0}
\tilde G^\eps_0(x) - G_0(x) & = & G_{n,\eps} - G_0(x) \nonumber \\
& = & G(\gamma_{n,\eps},\gamma_{n,\eps}/\eps^\alpha) - G_0(x) \nonumber \\
& = & G(\gamma_{n,\eps},\gamma_{n,\eps}/\eps^\alpha) - G_0(\gamma_{n,\eps}) + G_0(\gamma_{n,\eps}) - G_0(x).
\end{eqnarray}
It follows from definition of $G_0$ in \eqref{FG0}, that
$$
G(\gamma_{n,\eps},\gamma_{n,\eps}/\eps^\alpha) - G_0(\gamma_{n,\eps}) \geq 0.
$$
Also, since $G(x,\cdot)$ is $l(x)$-periodic with $|l(x)| \leq L$, we have that 
there exist $y(\gamma_{n,\eps}) \in [0,l(\gamma_{n,\eps})]$ and $k(\gamma_{n,\eps}) \in \N$ with  
$y(\gamma_{n,\eps}) + k(\gamma_{n,\eps}) \, l(\gamma_{n,\eps}) \in [(n-1) \, L \, \eps^\alpha, n \, L \, \eps^\alpha]$,
such that
\begin{equation} \label{eqG1}
G_0(\gamma_{n,\eps}) = G(\gamma_{n,\eps}, y(\gamma_{n,\eps}) )
= G(\gamma_{n,\eps}, y(\gamma_{n,\eps}) + k(\gamma_{n,\eps}) \, l(\gamma_{n,\eps}) ).
\end{equation}
Consequently, we get from (\ref{eqG00}) and (\ref{eqG1}) that
\begin{eqnarray} \label{eqG2} 
G(\gamma_{n,\eps},\gamma_{n,\eps}/\eps^\alpha) - G_0(\gamma_{n,\eps}) & = & 
G(\gamma_{n,\eps},\gamma_{n,\eps}/\eps^\alpha) - G( (y + k \, l) \, \eps^\alpha , (y + k \, l) \, \eps^\alpha / \eps^\alpha ) \nonumber \\
& & + G( (y + k \, l) \, \eps^\alpha , (y + k \, l) \, \eps^\alpha / \eps^\alpha ) - G(\gamma_{n,\eps}, (y + k \, l) \, \eps^\alpha/\eps^\alpha ) 
\nonumber \\
& \leq & G( (y + k \, l) \, \eps^\alpha , (y + k \, l) \, \eps^\alpha / \eps^\alpha ) 
- G(\gamma_{n,\eps}, (y + k \, l) \, \eps^\alpha/\eps^\alpha )
\end{eqnarray}
since 
$$
G(\gamma_{n,\eps},\gamma_{n,\eps}/\eps^\alpha) - G( (y + k \, l) \, \eps^\alpha , (y + k \, l) \, \eps^\alpha / \eps^\alpha )
\leq 0.
$$
Therefore, due to (\ref{eqG0}), (\ref{eqG2}) and (\ref{eqGU}), we obtain
\begin{eqnarray*}
| \tilde G^\eps_0(x) - G_0(x) | 
& \leq & |G( (y + k \, l) \, \eps^\alpha , (y + k \, l) \, \eps^\alpha / \eps^\alpha ) 
- G(\gamma_{n,\eps}, (y + k \, l) \, \eps^\alpha/\eps^\alpha )| + |G_0(\gamma_{n,\eps}) - G_0(x)| 
 <  2 \eta 
\end{eqnarray*}
whenever $x \in [\gamma_{n,\eps},\gamma_{n+1,\eps}]$. 

Then, since $x \in [0,1]$ is arbitrary and $\cup_{n=1}^{N_\eps} [\gamma_{n,\eps},\gamma_{n+1,\eps}] = [0,1]$, 
we conclude the proof.
\end{proof}

The following result will also be needed.  
\begin{lemma}
We have the following
\begin{equation} \label{LGE}
G_\eps(\cdot)  \etow  \frac{1}{l(\cdot)}\int_0^{l(\cdot)}G(\cdot,s)ds \qquad  w^*-L^\infty(0,1).
\end{equation}
\end{lemma}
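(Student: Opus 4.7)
The plan is to prove the weak-$*$ convergence by testing against continuous functions and reducing to the standard periodic homogenization result on small subintervals on which the period $l(x)$ and profile $G(x,\cdot)$ are nearly constant in $x$.

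First I would observe that by hypothesis (H2) and \eqref{HG}, the family $G_\eps$ is uniformly bounded in $L^\infty(0,1)$, so by density it suffices to check that, for every $\varphi\in C([0,1])$,
$$
\int_0^1 G_\eps(x)\,\varphi(x)\,dx \;\longrightarrow\; \int_0^1 \bar G(x)\,\varphi(x)\,dx,
\qquad \bar G(x):=\frac{1}{l(x)}\int_0^{l(x)}G(x,s)\,ds.
$$
Along the way I would note that $\bar G$ is continuous on $[0,1]$: since $G$ is uniformly continuous in $x$ (uniform in $y$) and $l$ is continuous with $0<L'\le l\le L$, the map $x\mapsto \int_0^{l(x)}G(x,s)\,ds$ is continuous and bounded, and dividing by the continuous, strictly positive $l(x)$ preserves continuity.

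Next, given $\eta>0$, use the uniform continuity of $G$ in $x$ and the continuity of $l$ to choose $\delta>0$ so that $|G(x,y)-G(x',y)|<\eta$ for all $y\in\R$ and $|l(x)-l(x')|<\eta$ whenever $|x-x'|<\delta$. Partition $[0,1]$ into intervals $I_i=[a_{i-1},a_i]$, $i=1,\dots,N$, of length less than $\delta$, and pick $x_i\in I_i$. On $I_i$, freeze the slow variable: the replacement error
$$
\left|\int_{I_i}\bigl(G(x,x/\eps^\alpha)-G(x_i,x/\eps^\alpha)\bigr)\varphi(x)\,dx\right|
\;\le\; \eta\,\|\varphi\|_\infty\,|I_i|,
$$
sums to $\eta\,\|\varphi\|_\infty$ globally. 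For the frozen integrals $\int_{I_i}G(x_i,x/\eps^\alpha)\varphi(x)\,dx$, the function $y\mapsto G(x_i,y)$ is $l(x_i)$-periodic, and the standard Riemann--Lebesgue type result for periodic functions (see e.g.\ \cite{BLP,CP}) applied on the fixed interval $I_i$ gives
$$
\int_{I_i}G(x_i,x/\eps^\alpha)\varphi(x)\,dx \;\longrightarrow\; \frac{1}{l(x_i)}\int_0^{l(x_i)}G(x_i,s)\,ds\cdot \int_{I_i}\varphi(x)\,dx
=\int_{I_i}\bar G(x_i)\,\varphi(x)\,dx
$$
as $\eps\to0$.

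Finally, by the continuity of $\bar G$ and a suitable choice of the partition (refining if necessary so that the oscillation of $\bar G$ on each $I_i$ is below $\eta$), each $\int_{I_i}\bar G(x_i)\varphi(x)\,dx$ differs from $\int_{I_i}\bar G(x)\varphi(x)\,dx$ by at most $\eta\,\|\varphi\|_\infty|I_i|$. Summing over $i$ and sending first $\eps\to0$ and then $\eta\to0$ yields the desired weak-$*$ convergence. The main technical point is the position-dependent period $l(x)$: the fixed-period homogenization cannot be invoked directly, and the above freezing-and-partitioning argument, justified by the uniform continuity built into (H2), is the way to overcome it.
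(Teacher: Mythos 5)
Your proof is correct and follows essentially the same route as the paper: test against a dense class, partition $[0,1]$ to freeze the slow variable $x$ (exploiting the uniform-in-$y$ continuity of $G$ from (H2)), apply the classical average/Riemann--Lebesgue theorem for the fixed-period frozen profile on each subinterval, and then send $\eps\to 0$ before $\eta\to 0$. The only cosmetic differences are that you test against $C([0,1])$ while the paper tests against characteristic functions of intervals, and that you bundle the three error contributions the paper labels $I^3,I^4,I^5$ (arising from comparing $l(\hat x_i)$ to $l(x)$ and $G(\hat x_i,\cdot)$ to $G(x,\cdot)$) into the single observation that the averaged profile $\bar G$ is continuous; this is a mild streamlining rather than a different argument.
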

\begin{proof}
We have to prove
\begin{equation} \label{eqGE0}
\int_0^1 \left\{
G_\eps(x) - \frac{1}{l(x)} \int_0^{l(x)} G(x,s) \, ds
\right\} \varphi(x) \, dx \to 0 \quad \textrm{ as } \eps \to 0
\end{equation}
for all $\varphi \in L^1(0,1)$.  With standard density arguments, it is enough to show  (\ref{eqGE0}) when $\varphi$ is a  characteristic function.
Then, for $0 \leq e < f \leq 1$ we consider the following characteristic function
$$
\varphi(x) = 
\left\{
\begin{array}{ll}
1 & x \in (e,f) \\
0 & x \notin (e,f)
\end{array}
\right. .
$$
So, we have to estimate the integral
$$
I = \int_e^f \left\{
G_\eps(x) - \frac{1}{l(x)} \int_0^{l(x)} G(x,s) \, ds
\right\} dx
$$
as $\eps > 0$ goes to zero.

For this, let $\eta>0$ be a small number and let $\{ e=x_0, x_1, ..., x_n=f \}$ be a partition for the interval $(e,f)$, and 
$\hat x_i$ be a fixed point in the interval $J_i=[x_{i-1},x_i]$, $i=1, ..., n$, such that 
$$
\sup_i \sup_{x \in J_i, \, y \in \R}
| G(x,y) - G(\hat x_i,y) | < \eta.
$$
Observe that we can write 
$$
I = \sum_{i=1}^5 I^i
$$
where
$$
\begin{gathered}
I^1 =   
\sum_{i=1}^n \int_{J_i} \left\{ G(x,x/\eps^\alpha) - G(\hat x_i,x/\eps^\alpha) \right\} dx \\
I^2 = \sum_{i=1}^n \int_{J_i} \left\{ G(\hat x_i,x/\eps^\alpha) - 
\frac{1}{l(\hat x_i)} \int_0^{l(\hat x_i)} G(\hat x_i,s) \, ds \right\} dx \\
I^3 = \sum_{i=1}^n \int_{J_i} \left\{ \frac{1}{l(\hat x_i)} \int_0^{l(\hat x_i)} G(\hat x_i,s) \, ds - 
\frac{1}{l(\hat x_i)} \int_0^{l(\hat x_i)} G(x,s) \, ds \right\} dx \\
I^4 = \sum_{i=1}^n \int_{J_i} \left\{ \frac{1}{l(\hat x_i)} \int_0^{l(\hat x_i)} G(x,s) \, ds - 
\frac{1}{l(\hat x_i)} \int_0^{l(x)} G(x,s) \, ds \right\} dx \\
I^5 = \sum_{i=1}^n \int_{J_i} \left\{ \frac{1}{l(\hat x_i)} \int_0^{l(x)} G(x,s) \, ds - 
\frac{1}{l(x)} \int_0^{l(x)} G(x,s) \, ds \right\} dx .
\end{gathered}
$$

It is easy to estimate the integrals $I^1$, $I^3$, $I^4$ and $I^5$ to obtain
\begin{equation} \label{ESTINTS}
\begin{gathered}
|I^1| \leq \eta \, (f-e) \\
|I^3| \leq \eta \, (f-e) \\
|I^4| \leq G_1 \, \| \hat l^\eta - l \|_{L^\infty(0,1)} \, (f-e) \\
|I^5| \leq G_1 \, \left( L/{L'}^2 \right) \, \| \hat l^\eta - l \|_{L^\infty(0,1)} \, (f-e)
\end{gathered}
\end{equation}
where $G_1$, $L$ and $L'$ are the positive constants given by hypothesis $(H2)$, and 
the function $\hat l^\eta$ is the step function defined for each $\eta > 0$ by 
$$
\hat l^\eta(x) = l(x_i) \quad \textrm{ as } x_i \in J_i.
$$
Since inequalities (\ref{ESTINTS}) do not depend on $\eps > 0$, and
$\| \hat l^\eta - l \|_{L^\infty(0,1)} \to 0$ as $\eta \to 0$ uniformly in $\eps$,
we have that $I^1$, $I^3$, $I^4$ and $I^5$ go to zero as $\eta \to 0$
uniformly in $\eps > 0$.

Hence, to conclude the proof, we just evaluate the integral $I^2$.
But this is a straightforward application of the Average Theorem 
since $\hat x_i$ is a fixed point in $J_i$, and $G(\hat x_i,\cdot)$ is a 
$l(\hat x_i)$-periodic function. Indeed,
$$
I^2 = \sum_{i=1}^n \int_{J_i} \left\{ G(\hat x_i,x/\eps^\alpha) - 
\frac{1}{l(\hat x_i)} \int_0^{l(\hat x_i)} G(\hat x_i,s) \, ds \right\} dx
\to 0 \textrm{ as } \eps \to 0.
$$
\end{proof}

\subsection{Proof of Theorem \ref{main}} \label{PofC}

Here, we give a proof of Theorem \ref{main}.

\begin{proof}
The variational formulation of (\ref{P}) is:  find $u^\epsilon \in H^1(\Omega^\epsilon)$ such that 
\begin{equation} \label{VFP}
\int_{\Omega^\epsilon} \Big\{ \frac{\partial u^\epsilon}{\partial x_1} \frac{\partial \varphi}{\partial x_1} 
+ \frac{1}{\epsilon^2} \frac{\partial u^\epsilon}{\partial x_2} \frac{\partial \varphi}{\partial x_2}
+ u^\epsilon \varphi \Big\} dx_1 dx_2 = \int_{\Omega^\epsilon} f^\epsilon \varphi dx_1 dx_2, 
\quad \forall \varphi \in H^1(\Omega^\epsilon).
\end{equation}
Taking $\varphi = u^\epsilon$ in  expression (\ref{VFP}) and using that $\|f^\eps\|_{L^2(\Omega^\eps)}\leq C$,  we easily obtain the a priori bounds
\begin{equation} \label{EST0}
\begin{gathered}
\| u^\epsilon \|_{L^2(\Omega^\epsilon)}, \Big\| \frac{\partial u^\epsilon}{\partial x_1} \Big\|_{L^2(\Omega^\epsilon)}
\textrm{ and } \frac{1}{\epsilon} \Big\| \frac{\partial u^\epsilon}{\partial x_2} \Big\|_{L^2(\Omega^\epsilon)} 
\le C.
\end{gathered}
\end{equation}
In particular, we have 
$$
\Big\| \frac{\partial u^\epsilon}{\partial x_2} \Big\|_{L^2(\Omega^\epsilon)} \leq \eps \, C \to 0 \textrm{ as } \eps \to 0.
$$

Let us observe that domain $\Omega^\eps$ consists of two main parts. One of them is a highly oscillating domain $\Omega^\eps_+$ and 
the other one is a non oscillating domain $\Omega^\eps_-$.
To define these domains, we use the step function $\tilde G_{0}^\eps$ defined in (\ref{PFG}). 
So, we consider the following open sets 
\begin{equation} \label{DOMAINS}
\begin{gathered}
\Omega^\eps_- = \{ (x_1, x_2) \in \R^2 \, | \, x_1 \in (0,1), \, -b(x_1) < x_2 < \tilde G_{0}^\eps(x_1) \} \\
\Omega^\eps_+ = \{ (x_1, x_2) \in \R^2 \, | \, x_1 \in (0,1), \, \tilde G_{0}^\eps(x_1) < x_2 < G_\eps(x_1) \}.
\end{gathered}
\end{equation}
Notice that
$$
\begin{gathered}
\Omega^\eps = \mbox{Int} \left( \overline{\Omega^\eps_+ \cup \Omega^\eps_-} \right).
\end{gathered}
$$

We will also need to consider the domain
$$
\Omega_0 = \{ (x_1, x_2) \in \R^2 \, | \, x_1 \in (0,1), \, -b(x_1) < x_2 < G_0(x_1) \},
$$
where $G_0$ is defined by (\ref{FG0}).  Notice that we have $\Omega_0\subset \Omega^\eps$. 

We want to pass to the limit in the variational formulation (\ref{VFP}) for certain appropriately chosen test functions. 
In order to accomplish this, we rewrite it as follows
\begin{eqnarray} \label{VFP2}
& & \int_{\Omega^\epsilon_+} \Big\{ \frac{\partial u^\epsilon}{\partial x_1} \frac{\partial \varphi}{\partial x_1} 
+ \frac{1}{\epsilon^2} \frac{\partial u^\epsilon}{\partial x_2} \frac{\partial \varphi}{\partial x_2} \Big\} dx_1 dx_2
+ \int_{\Omega^\eps_-} \Big\{ \frac{\partial u^\epsilon}{\partial x_1} \frac{\partial \varphi}{\partial x_1} 
+ \frac{1}{\epsilon^2} \frac{\partial u^\epsilon}{\partial x_2} \frac{\partial \varphi}{\partial x_2} \Big\} dx_1 dx_2
\nonumber \\
& & \quad \quad + \int_{\Omega^\epsilon} u^\epsilon \varphi \, dx_1 dx_2 = \int_{\Omega^\epsilon} f^\epsilon \varphi \, dx_1 dx_2, 
\, \forall \varphi \in H^1(\Omega^\epsilon).
\end{eqnarray}
Now, we pass to the limit in the different functions that form the integrands of \eqref{VFP2}.

\par\bigskip\noindent {\bf  (a). Limit of $u^\eps$ in $L^2(\Omega^\epsilon)$}. 


It follows from (\ref{EST0}) that $u^\eps|_{\Omega_0} \in H^1(\Omega_0)$ and satisfies for all $\epsilon > 0$
$$
\| u^\epsilon \|_{L^2(\Omega_0)}, \Big\| \frac{\partial u^\epsilon}{\partial x_1} \Big\|_{L^2(\Omega_0)}
\textrm{ and } \frac{1}{\epsilon} \Big\| \frac{\partial u^\epsilon}{\partial x_2} \Big\|_{L^2(\Omega_0)} 
\le C.
$$
Then, we can extract a subsequence of $\{ u^\eps|_{\Omega_0} \} \subset H^1(\Omega_0)$, denoted again by $u^\epsilon$, such that
\begin{equation} \label{WC0}
\begin{gathered}
u^\epsilon \rightharpoonup u_0 \quad w-H^1(\Omega_0) \\
u^\epsilon \rightarrow u_0 \quad s-H^s(\Omega_0) \textrm{ for all } s \in [0, 1) \textrm{ and }\\
\frac{\partial u^\epsilon}{\partial x_2} \rightarrow 0 \quad s-L^2(\Omega_0) \\
\end{gathered}
\end{equation}
as $\epsilon \to 0$ for some  $u_0 \in H^1(\Omega_0)$.

A consequence of the limits (\ref{WC0}) is that $ u_0(x_1,x_2)$ does not depend on the variable $x_2$.  
More precisely,
\begin{equation} \label{U0}
\frac{\partial  u_0}{\partial x_2}(x_1,x_2) = 0 \textrm{ a.e. } \Omega_0.
\end{equation}
Also, due to (\ref{WC0}), we have that the restriction of $u^\eps$ to the coordinate axis $x_1$ converges to $u_0$.
That is, if $\Gamma = \{ (x_1,0) \in \R^2 \, | \, x_1 \in (0,1) \}$, then 
\begin{equation} \label{TRACE}
u^\eps|_{\Gamma} \rightarrow u_0 \quad s-H^s(\Gamma), \quad \forall s \in [0,1/2)
\end{equation}

Now, we can see that (\ref{TRACE}) with $s=0$,  implies the $L^2$-convergence of $u^\eps$ to $u_0$, that is  
\begin{equation} \label{L2CONV}
\| u^\eps - u_0 \|_{L^2(\Omega^\eps)} \to 0 \textrm{ as } \eps \to 0.
\end{equation}
In fact, it follows from (\ref{TRACE}) that
\begin{eqnarray*}
& &\|u^\eps(\cdot,0)-u_0\|^2_{L^2(\Omega_\eps)}  = 
\int_0^1 \int_{-b(x_1)}^{G_\eps(x_1)} | u^\eps(x_1,0) - u_0(x_1) |^2 \, dx_2 dx_1 \\
& &\qquad\leq  C(G,b)\, \| u^\eps - u_0 \|_{L^2(\Gamma)} \to  0 \textrm{ as } \eps \to 0
\end{eqnarray*}
where $C(G,b)=\|b\|_{L^\infty}+G_1$.
Also,
$$
u^\eps(x_1,x_2) - u^\eps(x_1,0) = \int_0^{x_2} \frac{\partial u^\eps}{\partial x_2}(x_1,s) \, ds
$$
and with H\"older inequality, 
$$
| u^\eps(x_1,x_2) - u^\eps(x_1,0) |^2 \leq \left( \int_0^{x_2} \left| \frac{\partial u^\eps}{\partial x_2}(x_1,s) \right|^2 ds \right) \, |x_2|.
$$
Hence, integrating in $\Omega^\eps$ and using (\ref{EST0}) to get
\begin{eqnarray*}
& & \|u^\eps - u^\eps(\cdot,0)\|_{L^2(\Omega_\eps)}^2= 
\int_0^1 \int_{-b(x_1)}^{G_\eps(x_1)} | u^\eps(x_1,x_2) - u^\eps(x_1,0) |^2 \, dx_1 dx_2 \\
& &\quad \leq  \int_0^1 \int_{-b(x_1)}^{G_\eps(x_1)} \left(  \int_0^{x_2} \left| \frac{\partial u^\eps}{\partial x_2}(x_1,s) \right|^2 ds \right) 
\, |x_2| \, dx_2 dx_1 \\ & & \qquad \leq  C(G,b) \, \left\| \frac{\partial u^\eps}{\partial x_2} \right\|_{L^2(\Omega^\eps)}^2 \leq  C(G,b)\cdot C\eps^2  \to 0 \textrm{ as } \eps \to 0.
\end{eqnarray*}

Therefore,
\begin{eqnarray*}
\| u^\eps - u_0 \|_{L^2(\Omega_\eps)}  & \leq & \|u^\eps - u^\eps(\cdot,0)\|_{L^2(\Omega_\eps)}+ 
 \|u^\eps(\cdot,0) - u_0\|_{L^2(\Omega_\eps)} \to 0 \textrm{ as } \eps \to 0.
\end{eqnarray*}

\par\bigskip\noindent {\bf  (b). Limit of $f^\eps$}. 

Since $\|f^{ \eps}\|_{L^2(\Omega^{ \eps})}\leq C$, with $C$ independent of $\eps$, we have that the function $\hat f^\eps$ defined by 
\begin{equation} \label{FHF}
\hat f^\eps (x_1) \equiv \int_{-b(x_1)}^{G_\eps(x_1)} f^\eps(x_1,x_2)dx_2
\end{equation}
belongs to $L^2(0,1)$ and satisfies 
$
\| \hat f^\eps \|_{L^2(0,1)} \leq C 
$ 
for some constant $C$ independent of $\eps$ also.
Hence, via subsequences, we have the existence of a function 
$\hat f=\hat f(x_1)\in L^2(0,1)$ such that 
\begin{equation}\label{LIMITF}
\hat f^\eps \rightharpoonup \hat f \qquad w-L^2(0,1).
\end{equation}

\begin{remark}\label{remark-1}
Observe that in the case where $f^\eps(x_1,x_2)=f(x_1)$ then  
\begin{eqnarray*}
\hat f^\eps(x_1) = \left( G(x_1,x_1/\eps^\alpha) + b(x_1)  \right) \, f(x_1) \rightharpoonup  p(x_1) \, f(x_1) \qquad w^*-L^\infty(0,1)
\end{eqnarray*}
where the function $p$ is given by
\begin{equation} \label{PF}
p(x) = \frac{1}{l(x)} \int_0^{l(x)} G(x,s) \, ds + b(x),
\end{equation}
and observe that $ \frac{1}{l(x)} \int_0^{l(x)} G(x,s)ds$ is the weak $*$-$L^\infty(0,1)$ limit of $G_\eps(x)$ obtained in (\ref{LGE}). 
Consequently, we have that 
$$
\hat f(x) = p(x) \, f(x) \quad x \in (0,1).
$$
\end{remark}

\par\bigskip\noindent {\bf  (c). Test functions}.

Here, we define suitable test functions that will allow us to pass to the limit in the variational formulation (\ref{VFP2}). 
For this, we use the definition of the open sets $\Omega^\eps_-$ and $\Omega^\eps_+$ given in (\ref{DOMAINS}).

For each $\phi \in H^1(0,1)$ and $\eps > 0$, 
we define the following test functions in $H^1(\Omega^\eps)$
\begin{equation} \label{TESTF}
\begin{gathered}
\varphi^\eps(x_1,x_2) = \left\{
\begin{array}{ll}
X^\eps_n(x_1,x_2), & (x_1,x_2) \in \Omega^\eps_+\cap Q^\eps_n,\quad n=1,2,\ldots \\
\phi(x_1), & (x_1,x_2) \in \Omega_-^\eps 
\end{array}
\right.
\end{gathered}
\end{equation}
where  $Q^\eps_n$ is the rectangle (see Figure 4)
$$
Q^\eps_n=\{ (x_1,x_2) \, | \, \gamma_{n,\eps}<x_1<\gamma_{n+1,\eps}, \, \tilde G_{0}^\eps(x_1)<x_2<G_1 \}
$$
and the function $X^\eps_n$ is the solution of the problem
\begin{equation} \label{AUXSOL}
\left\{
\begin{gathered}
- \frac{\partial^2 X^\eps}{\partial x_1^2} - \frac{1}{\eps^2} \frac{\partial^2 X^\eps}{\partial x_2^2} 
= 0, \quad \textrm{ in } Q^\eps_n \\
\frac{\partial X^\eps}{\partial N^\eps}=0, \quad \textrm{ on }  \partial Q^\eps_n \backslash \Gamma_n^\eps  \\
X^\eps(x_1,x_2) = \phi(x_1),  \quad \textrm{ on } \Gamma_n^\eps
\end{gathered}
\right. 
\end{equation}
where $\Gamma_n^\eps$ is the base of the rectangle, that is, 
$$
\Gamma_n^\eps = \{ (x_1, \tilde G_0^\eps(x_1)) \, | \, \gamma_{n,\eps}\leq x_1\leq \gamma_{n+1,\eps}\}.
$$

\begin{figure}[htp]\label{Figure4}
\centering \scalebox{0.5}{\includegraphics{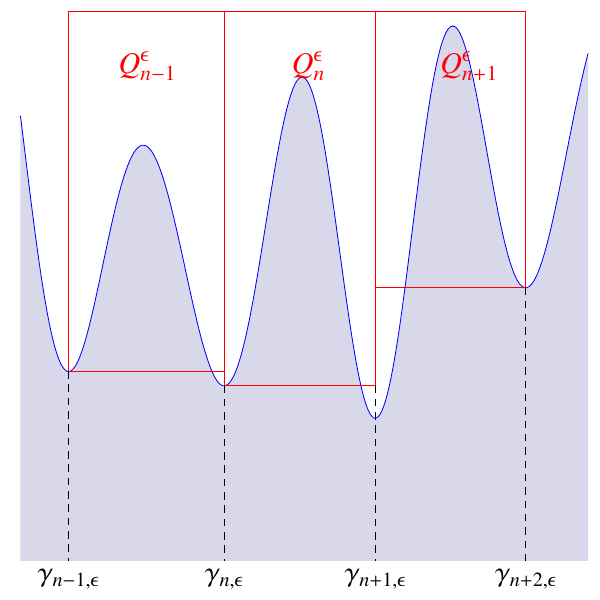}}
\caption{Rectangle $Q^\eps_n$.}
\end{figure}

It follows from estimate (\ref{basic-estimate}) that 
\begin{equation} \label{ESTX}
\left\| \frac{\partial X^\eps}{\partial x_1} \right\|^2_{L^2(Q^\eps_n)}
+ \frac{1}{\eps^2} \left\| \frac{\partial X^\eps}{\partial x_2} \right\|^2_{L^2(Q^\eps_n)}
\leq C \eps^{\alpha-1} \| \phi' \|^2_{L^2(\gamma_{n,\eps},\gamma_{n+1,\eps})}.
\end{equation}
If we denote by  $Q^\eps = \cup_{i=1}^{N_\eps} Q^\eps_n$, we have $\Omega^\eps_+= Q^\eps \cap \Omega^\eps$. Hence, 
\begin{equation} \label{ESTX2}
\begin{array}{l}
\displaystyle \left\| \frac{\partial \varphi^\eps}{\partial x_1} \right\|^2_{L^2(\Omega^\eps_+)}
+ \frac{1}{\eps^2} \left\| \frac{\partial \varphi^\eps}{\partial x_2} \right\|^2_{L^2(\Omega^\eps_+)}
= \sum^{N_\eps}_{i=1} \left( 
\left\| \frac{\partial \varphi^\eps}{\partial x_1} \right\|^2_{L^2(Q^\eps_n)}
+ \frac{1}{\eps^2} \left\| \frac{\partial \varphi^\eps}{\partial x_2} \right\|^2_{L^2(Q^\eps_n)}
\right) \\
\displaystyle \qquad \quad\leq \sum^{N_\eps}_{i=1} C \, \eps^{\alpha -1} \, \left\| \phi' \right\|^2_{L^2(\gamma_{\eps,n},\gamma_{n+1,\eps})}  \leq  C \, \eps^{\alpha -1} \left\| \phi' \right\|^2_{L^2(0,1)}.
\end{array}
\end{equation}
Furthermore, we can show that
\begin{equation} \label{TFCONV}
\| \varphi^\eps - \phi \|_{L^2(\Omega^\eps)} \to 0 \textrm{ as } \eps \to 0.
\end{equation}
We can argue as in (\ref{L2CONV}). Indeed, since
$$
\varphi^\eps(x_1,x_2) - \phi(x_1) = \varphi^\eps(x_1,x_2) - \varphi^\eps(x_1,0) 
= \int_0^{x_2} \frac{\partial \varphi^\eps}{\partial x_2}(x_1,s) \, ds,
$$
we have by (\ref{TESTF}) and (\ref{ESTX2}) that
\begin{eqnarray*}
\| \varphi^\eps - \phi \|_{L^2(\Omega^\eps)}^2  \leq  C(G,b) \, \left\| \frac{\partial \varphi^\eps}{\partial x_2} \right\|_{L^2(\Omega^\eps)}^2
 \le  C\cdot  C(G,b) \, \eps^{1+\alpha} \, \left\| \phi' \right\|^2_{L^2(0,1)} 
 \to  0 \textrm{ as } \eps \to 0.
\end{eqnarray*}

\par\bigskip\noindent {\bf  (d). Passing to the limit in the weak formulation}.

Now, we pass to the limit in the variational formulation of the problem using the test functions $\varphi^\eps$ defined above.
For this, we analyze the different functions that form the integrands in \eqref{VFP2}.

\begin{itemize}
\item First integrand: we claim that
\begin{equation} \label{INT1}
\int_{\Omega^\epsilon_+} \Big\{ \frac{\partial u^\epsilon}{\partial x_1} \frac{\partial \varphi^\eps}{\partial x_1} 
+ \frac{1}{\epsilon^2} \frac{\partial u^\epsilon}{\partial x_2} \frac{\partial \varphi^\eps}{\partial x_2} \Big\} dx_1 dx_2
\to 0 \textrm{ as } \eps \to 0.
\end{equation}

Indeed, it follows from (\ref{ESTX2}) and $\alpha > 0$ that
\begin{eqnarray} \label{eqd1}
& & \qquad \qquad\qquad \big|\int_{\Omega^\epsilon_+} \Big\{ \frac{\partial u^\epsilon}{\partial x_1} \frac{\partial \varphi^\eps}{\partial x_1} 
+ \frac{1}{\epsilon^2} \frac{\partial u^\epsilon}{\partial x_2} \frac{\partial \varphi^\eps}{\partial x_2} \Big\} dx_1 dx_2\big| \nonumber  
\\
& & \quad \leq  \left( \int_{\Omega^\eps_u} \Big\{ \left( \frac{\partial u^\epsilon}{\partial x_1} \right)^2
+ \frac{1}{\epsilon^2} \left( \frac{\partial u^\epsilon}{\partial x_2} \right)^2 \Big\} dx_1 dx_2 \right)^{1/2}
\left( \int_{\Omega^\eps_+} \Big\{ \left( \frac{\partial X^\epsilon}{\partial x_1} \right)^2
+ \frac{1}{\epsilon^2} \left( \frac{\partial X^\epsilon}{\partial x_2} \right)^2 \Big\} dx_1 dx_2 \right)^{1/2} \nonumber \\
& & \quad \leq  C \, \eps^{(\alpha - 1)/2} \,  \| u^\eps \|_{H^1(\Omega^\eps)} \, \| \phi' \|_{L^2(0,1)} \to 0 \textrm{ as } \eps \to 0. 
\end{eqnarray}

\par\medskip 
\item Second integrand: we aim
\begin{equation} \label{INT2}
\int_{\Omega^\eps_-} \Big\{ \frac{\partial u^\epsilon}{\partial x_1} \frac{\partial \varphi^\eps}{\partial x_1} 
+ \frac{1}{\epsilon^2} \frac{\partial u^\epsilon}{\partial x_2} \frac{\partial \varphi^\eps}{\partial x_2} \Big\} dx_1 dx_2
\to \int_0^1 \left( G_0(x_1) + b(x_1) \right) \, u'_0(x_1) \,  \phi'(x_1) \, dx_1 \textrm{ as } \eps \to 0.
\end{equation}
To prove this, observe that using (\ref{TESTF}), we obtain
$$
\frac{\partial \varphi^\eps}{\partial x_1} \Big|_{\Omega^\eps_-} = \frac{\partial \phi}{\partial x_1} = \phi' \quad
\textrm{ and } \quad 
\frac{\partial \varphi^\eps}{\partial x_2} \Big|_{\Omega^\eps_-} = \frac{\partial \phi}{\partial x_2} = 0
$$ 
for all $\eps > 0$. Hence, we have that
\begin{eqnarray} \label{eqSI}
& & \int_{\Omega^\eps_-} \Big\{ \frac{\partial u^\epsilon}{\partial x_1} \frac{\partial \varphi^\eps}{\partial x_1} 
+ \frac{1}{\epsilon^2} \frac{\partial u^\epsilon}{\partial x_2} \frac{\partial \varphi^\eps}{\partial x_2} \Big\} dx_1 dx_2
 =  \int_{\Omega^\eps_-} \frac{\partial u^\epsilon}{\partial x_1}(x_1,x_2) \,  \phi'(x_1) \, dx_1 dx_2  \nonumber \\
& & \qquad \quad \quad = \int_{\Omega_0} \frac{\partial u^\epsilon}{\partial x_1}(x_1,x_2) \,  \phi'(x_1) \, dx_1 dx_2 
- \int_{\Omega_0 \backslash \Omega^\eps_-} \frac{\partial u^\epsilon}{\partial x_1}(x_1,x_2) \,  \phi'(x_1) \, dx_1 dx_2 \nonumber \\
& & \qquad \quad \quad + \int_{\Omega^\eps_- \backslash \Omega_0} \frac{\partial u^\epsilon}{\partial x_1}(x_1,x_2) \,  \phi'(x_1) \, dx_1 dx_2.
\end{eqnarray}
Due to (\ref{WC0}), we can pass to the limit as $\eps \to 0$ in the first integral on the right hand side of (\ref{eqSI}) to obtain
\begin{eqnarray*} 
\int_{\Omega_0} \frac{\partial u^\epsilon}{\partial x_1}(x_1,x_2) \,  \phi'(x_1) \, dx_1 dx_2
& \to & \int_{\Omega_0} u'_0(x_1) \,  \phi'(x_1) \, dx_1 dx_2.   
\end{eqnarray*}
Also, we have that
\begin{eqnarray} \label{eqINT21}
\int_{\Omega_0} u'_0(x_1) \,  \phi'(x_1) \, dx_1 dx_2
& = & \int_0^1 u'_0(x_1) \,  \phi'(x_1) \, \left( \int_{-b(x_1)}^{G_0(x_1)} dx_2 \right)dx_1 \nonumber \\
& = & \int_0^1 \left( G_0(x_1) + b(x_1) \right) \, u'_0(x_1) \,  \phi'(x_1) \, dx_1.
\end{eqnarray}

Now, we will get (\ref{INT2}) if we prove that the remaining integrals of (\ref{eqSI}) tend to zero as $\eps \to 0$.
We evaluate one of them. The computations for the other are similar. 

From (\ref{EST0}), (\ref{DOMAINS}) and Remark \ref{ESTINF}, we have that
\begin{eqnarray} \label{eqINT22}
\int_{\Omega^\eps_- \backslash \Omega_0} \frac{\partial u^\epsilon}{\partial x_1}(x_1,x_2) \,  \phi'(x_1) \, dx_1 dx_2
& \leq & \left\| \frac{\partial u^\epsilon}{\partial x_1} \right\|_{L^2(\Omega^\eps)}
\, \| \phi' \|_{L^2(\Omega^\eps_- \backslash \Omega_0)} \nonumber \\
& \leq & C \, \left\{ \int_0^1  {\phi'(x_1)}^2 \, \left| G_0(x_1) - \tilde G^\eps_0(x_1) \right| \, dx_1 \right\}^{1/2} \nonumber \\
& \leq & C \, \| \phi' \|_{L^2(0,1)} \, \| G_0 -  \tilde G^\eps_0 \|_{L^\infty(0,1)}^{1/2} \nonumber \\
& \to & 0 \textrm{ as } \eps \to 0.
\end{eqnarray}

Therefore, we obtain (\ref{INT2}) from (\ref{eqINT21}) and (\ref{eqINT22}).
\par\medskip

\item Third integrand: if $p(x)$ is defined in (\ref{PF}) then,
\begin{equation} \label{INT3}
\int_{\Omega^\eps} u^\eps \, \varphi^\eps \, dx_1 dx_2 \to \int_0^1 p(x_1) \, u_0(x_1) \, \phi(x_1) \, dx_1 \textrm{ as } \eps \to 0.
\end{equation}

To prove \eqref{INT3}, observe that 
\begin{eqnarray*}
\int_{\Omega^\eps} u^\eps \, \varphi^\eps \, dx_1 dx_2 & = & 
\int_{\Omega^\eps} \left( u^\eps - u_0 \right) \, \varphi^\eps \, dx_1 dx_2
+ \int_{\Omega^\eps} u_0 \, \left( \varphi^\eps - \phi \right) \, dx_1 dx_2
+ \int_{\Omega^\eps} u_0 \, \phi \, dx_1 dx_2.
\end{eqnarray*}

From (\ref{L2CONV}) and (\ref{TFCONV}), we have 
$$
\begin{gathered}
\int_{\Omega^\eps} \left( u^\eps - u_0 \right) \, \varphi^\eps \, dx_1 dx_2 \to 0 \quad \textrm{ and } \int_{\Omega^\eps} u_0 \, \left( \varphi^\eps - \phi \right) \, dx_1 dx_2 \to 0, \hbox{ as } \eps \to 0.
\end{gathered}
$$

Hence, since 
$$
\int_{\Omega^\eps} u_0(x_1) \, \phi(x_1) \, dx_1 dx_2 = \int_0^1 u_0(x_1) \, \phi(x_1) \, \left( G_\eps(x_1) + b(x_1) \right) \, dx_1,
$$
we get (\ref{INT3}) from (\ref{LGE}). 

\item Fourth integrand: we claim that
\begin{equation} \label{INT4}
\int_{\Omega^\eps} f^\eps \, \varphi^\eps \, dx_1 dx_2 \to \int_0^1  \hat f(x_1) \, \phi(x_1) \, dx_1 \textrm{ as } \eps \to 0.
\end{equation}
For this, let $\hat f \in L^2(0,1)$ be the function defined in (\ref{FHF}). 
Since
$$
\int_{\Omega^\eps} f^\eps \, \varphi^\eps \, dx_1 dx_2 = 
\int_{\Omega^\eps} f^\eps \, \left( \varphi^\eps - \phi \right) \, dx_1 dx_2
+ \int_{\Omega^\eps} f^\eps \, \phi \, dx_1 dx_2
$$ 
and 
$$
\int_{\Omega^\eps} f^\eps \, \phi \, dx_1 dx_2 = 
\int_0^1 \left( \int_{-b(x_1)}^{G_\eps(x_1)} f^\eps(x_1,x_2) \, dx_2 \right) \phi(x_1) \, dx_1
= \int_0^1 \hat f^\eps(x_1) \, \phi(x_1) \, dx_1,
$$
we get (\ref{INT4}) from (\ref{ESTF}), (\ref{LIMITF}) and (\ref{TFCONV}). 
\end{itemize}

Therefore, from (\ref{INT1}), (\ref{INT2}), (\ref{INT3}) and (\ref{INT4}) we obtain the following limit variational formulation 
\begin{equation} \label{limitP}
\int_0^1 \left\{ \left( G_0(x_1) + b(x_1) \right) \, u'_0(x_1) \, \phi'(x_1) + p(x_1) \, u_0(x_1) \, \phi(x_1) \right\} dx_1  
= \int_0^1 \hat f(x_1) \, \phi(x_1) \, dx_1,
\end{equation}
for all $\phi \in H^1(0,1)$. Since this problem is well posed, we obtain that the whole sequence $\{ u^\eps \}_{\eps > 0}$ is convergent and converges to the unique solution $u_0$ of (\ref{limitP}).

\par\bigskip\noindent {\bf  (e). Limit of $u^\epsilon$ in $H^1(\Omega_0)$}. 

Finally, let us show the strong convergence $u^\epsilon \to u_0$ in $H^1(\Omega_0)$.
We use that the norm is lower semicontinuous with respect to the weak convergence, that is, $\| u_0\|_{H^1(\Omega_0)} \leq \liminf_\epsilon \| u^\epsilon\|_{H^1(\Omega_0)}$. 
Indeed, since $u^\epsilon \rightharpoonup u_0 \quad w-H^1(\Omega_0)$ by \eqref{WC0}, we have for $\epsilon <<1$ that
\begin{eqnarray} \label{SEILA}
\int_{\Omega_0} {u'_0(x_1)}^2 \, dx_1 dx_2 & \leq & \liminf_\epsilon \int_{\Omega_0} | \nabla u^\epsilon |^2 \, dx_1 dx_2 \leq \limsup_\epsilon \int_{\Omega_0} | \nabla u^\epsilon |^2 \, dx_1 dx_2 \nonumber \\
& \leq & \limsup_\epsilon \int_{\Omega^\epsilon} | \nabla u^\epsilon |^2 \, dx_1 dx_2 \leq \limsup_\epsilon \left\{ \int_{\Omega^\epsilon} f^\epsilon \, u^\epsilon \, dx_1 dx_2 - \int_{\Omega^\epsilon} {u^\epsilon}^2 \, dx_1 dx_2 \right\} \nonumber  \\
& \leq & \limsup_\epsilon \Big\{ \int_{\Omega^\epsilon} f^\epsilon \left( u^\epsilon - u_0 \right) \, dx_1 dx_2 + \int_{\Omega^\epsilon} f^\epsilon \, u_0 \, dx_1 dx_2 \nonumber \\
& &  - \int_{\Omega^\epsilon} u_0^2 \, dx_1 dx_2 - \int_{\Omega^\epsilon} \left( {u^\epsilon}^2 - {u_0}^2 \right) \, dx_1 dx_2 \Big\} \\
& \leq & \int_0^1 \left\{ \hat f (x_1) \, u_0(x_1) - p(x) \, u_0(x_1)^2 \right\} dx_1 
= \int_0^1 \left( G_0(x_1) + b(x_1) \right) \, u'_0(x_1)^2 \, dx_1. \nonumber 
\end{eqnarray}
Here we have used \eqref{L2CONV}, \eqref{LIMITF} and \eqref{limitP}. 
Consequently, since 
$$
\int_{\Omega_0} {u'_0(x_1)}^2 \, dx_1 dx_2 = \int_0^1 \left( G_0(x_1) + b(x_1) \right) \, u'_0(x_1)^2 \, dx_1,
$$ 
we get $u^\epsilon \to u_0$ in $H^1(\Omega_0)$.
We conclude the proof of Theorem \ref{main}. 

\end{proof}

\section{Comb-like thin domains}
\label{comb}

We consider now, Type II thin domains as described in Section \ref{basics} and provide a proof of Theorem  \ref{main2}.

\begin{proof}

We will proceed as in the previous section to show this result. We will choose appropriate test functions 
to pass to the limit in the variational formulation of problem (\ref{P-typeII}) that we rewrite it here as:
find $u^\epsilon \in H^1(\Omega^\epsilon)$ such that 
\begin{eqnarray} \label{VFP3}
& & \int_{\Omega^\epsilon_+} \Big\{ \frac{\partial u^\epsilon}{\partial x_1} \frac{\partial \varphi}{\partial x_1} 
+ \frac{1}{\epsilon^2} \frac{\partial u^\epsilon}{\partial x_2} \frac{\partial \varphi}{\partial x_2} \Big\} dx_1 dx_2
+ \int_{\Omega_-} \Big\{ \frac{\partial u^\epsilon}{\partial x_1} \frac{\partial \varphi}{\partial x_1} 
+ \frac{1}{\epsilon^2} \frac{\partial u^\epsilon}{\partial x_2} \frac{\partial \varphi}{\partial x_2} \Big\} dx_1 dx_2
\nonumber \\
& & \quad \quad + \int_{\Omega^\epsilon} u^\epsilon \varphi \, dx_1 dx_2 = \int_{\Omega^\epsilon} f^\epsilon \varphi \, dx_1 dx_2, 
\, \forall \varphi \in H^1(\Omega^\epsilon).
\end{eqnarray}
Again, as in the previous case, taking $\varphi = u^\epsilon$ in  expression (\ref{VFP3}) and using that $\|f^\eps\|_{L^2(\Omega^\eps)}\leq C$,  we easily obtain the a priori bounds

\begin{equation} \label{EST0-II}
\begin{gathered}
\| u^\epsilon \|_{L^2(\Omega^\epsilon)}, \Big\| \frac{\partial u^\epsilon}{\partial x_1} \Big\|_{L^2(\Omega^\epsilon)}
\textrm{ and } \frac{1}{\epsilon} \Big\| \frac{\partial u^\epsilon}{\partial x_2} \Big\|_{L^2(\Omega^\epsilon)} 
\le C.
\end{gathered}
\end{equation}
In particular, we have 
$$
\Big\| \frac{\partial u^\epsilon}{\partial x_2} \Big\|_{L^2(\Omega^\epsilon)} \leq \eps \, C \to 0 \textrm{ as } \eps \to 0.
$$

We extract a subsequence of $\{ u^\eps|_{\Omega_-} \} \subset H^1(\Omega_-)$, denoted again by  $u^\epsilon$, such that
\begin{equation} \label{WCG}
\begin{gathered}
u^\epsilon \rightharpoonup u_0 \quad w-H^1(\Omega_-) \\
u^\epsilon \rightarrow u_0 \quad s-H^s(\Omega_-) \textrm{ for all } s \in [0, 1) \textrm{ and }\\
\frac{\partial u^\epsilon}{\partial x_2} \rightarrow 0 \quad s-L^2(\Omega_-) \\
\end{gathered}
\end{equation}
as $\epsilon \to 0$ for some  $u_0 \in H^1(\Omega_-)$.

As in (\ref{U0}), it follows from (\ref{WCG}) that $ u_0(x_1,x_2)$ does not depend on the variable $x_2$ and belongs to $H^1(0,1)$.  
Indeed, we can show that
$$
\frac{\partial  u_0}{\partial x_2}(x_1,x_2) = 0 \textrm{ a.e. } \Omega_-.
$$

\par\bigskip\noindent {\bf  (a). Limit of $u^\eps$ in $L^2(\Omega^\eps)$}. 

First, we obtain the $L^2$-convergence of $u^\eps$ to $u_0$.
More precisely, we show
\begin{equation} \label{L2CONVG}
\| u^\eps - u_0 \|_{L^2(\Omega^\eps)} \to 0 \textrm{ as } \eps \to 0.
\end{equation}

For this, we assume without loss of generality that 
$$
\Omega^\eps_+ \subset \{ (x_1,x_2) \in \R^2 \, \, | \, \, x_1 \in (0,1), \quad 0 < x_2 < b(x_1) \}
$$
and we define by `symmetrization' the following function $\hat u^\eps$ in $\Omega^\eps_+$ by
\begin{equation} \label{UHAT}
\hat u^\eps(x_1, x_2) = 
\left\{
\begin{array}{ll}
u^\eps(x_1, - x_2), & (x_1, x_2) \in \Omega^\eps_+ \\
u^\eps(x_1,x_2), & (x_1, x_2) \in \Omega_-.
\end{array}
\right.
\end{equation}
Consequently, it follows from (\ref{WCG}) that 
$$
\| \hat u^\eps - u_0 \|_{L^2(\Omega^\eps)} \to 0 \textrm{ as } \eps \to 0,
$$
and from \eqref{EST0-II}, we have
\begin{equation} \label{EST0-2-II}
\begin{gathered}
\| \hat u^\epsilon \|_{L^2(\Omega^\epsilon)}, \Big\| \frac{\partial \hat u^\epsilon}{\partial x_1} \Big\|_{L^2(\Omega^\epsilon)}
\textrm{ and } \frac{1}{\epsilon} \Big\| \frac{\partial \hat u^\epsilon}{\partial x_2} \Big\|_{L^2(\Omega^\epsilon)} 
\le C.
\end{gathered}
\end{equation}

Let us denote by  $w^\eps = u^\eps - \hat u^\eps$ in $\Omega^\eps$.
It is easy to see that $w^\eps \equiv 0$ in $\Omega_-$ and $w^\eps$ satisfies
\begin{equation} \label{ESTW}
\begin{gathered}
\| w^\eps \|_{H^1(\Omega^\eps_+)} = \| u^\eps - \hat u^\eps \|_{H^1(\Omega^\eps_+)} \leq C_1, \\
\left\| \frac{\partial w^\eps}{\partial x_1} \right\|_{L^2(\Omega^\eps_+)}^2 
+ \frac{1}{\eps^2} \left\| \frac{\partial w^\eps}{\partial x_2} \right\|_{L^2(\Omega^\eps_+)}^2 
+ \left\| w^\eps \right\|_{L^2(\Omega^\eps_+)}^2 \leq C_2.
\end{gathered}
\end{equation}

Now let us show that $\| w^\eps \|_{L^2(\Omega^\eps)} \to 0$ as $\eps \to 0$, that is, $\| w^\eps \|_{L^2(\Omega^\eps_+)} \to 0$ as $\eps \to 0$.  
Suppose this is not true and assume that $\| w^\eps \|_{L^2(\Omega^\eps_+)}^2 \geq c_0 > 0$ at least for a subsequence $\eps \to 0$.
Then we have that
$$
J(w^\eps) = \frac{\displaystyle\left\| \frac{ \partial w^\eps}{\partial x_1} \right\|_{L^2(\Omega^\eps_+)}^2 
+ \frac{1}{\eps^2} \left\| \frac{\partial w^\eps}{\partial x_2} \right\|_{L^2(\Omega^\eps_+)}^2 
+ \left\| w^\eps \right\|_{L^2(\Omega^\eps_+)}^2}{\displaystyle\| w^\eps \|^2_{L^2(\Omega^\eps_+)}} \leq \frac{C_2}{c_0} = C.
$$
This implies that the first eigenvalue of the problem
\begin{equation} \label{EVP}
\left\{
\begin{gathered}
- \frac{\partial^2 v^\epsilon}{{\partial x_1}^2} - 
\frac{1}{\epsilon^2} \frac{\partial^2 v^\epsilon}{{\partial x_2}^2} + v^\epsilon = \lambda_\eps \, v^\eps
\quad \textrm{ in } \Omega^\epsilon_+ \\
\frac{\partial v^\epsilon}{\partial x_1} \nu_1^\epsilon + \frac{1}{\epsilon^2} \frac{\partial v^\epsilon}{\partial x_2}\nu_2^\epsilon = 0
\quad \textrm{ on } \partial \Omega^\epsilon_+ \backslash \Gamma \\
v^\eps(x_1,0) = 0 \quad \textrm{ on } \Gamma
\end{gathered}
\right.
\end{equation}
satisfies $\lambda_\eps (\Omega^\eps_+) \leq C$, since $J$ is the associated Raleigh quotient 
and $\Gamma \subset \partial \Omega^\eps_+$ is a nonempty open subset.

But observe that $\Omega^\eps_+=\cup_{n=1}^{N_\eps}\Omega^\eps_{n,+}$ 
where all $\Omega^\eps_{n,+}$ are disjoint and identical,
except for translations. Then, we can conclude $\lambda_\eps (\Omega^\eps_+)=\lambda_\eps (\Omega^\eps_{n,+})$ for all $n$.

Performing in $\Omega^\eps_{n,+}$ the change of variables that transforms it into the fixed domain $Q_0$,
that is, $(x_1, x_2) \to (x_1/\eps^\alpha-nL,x_2)$, we will have that $\lambda_\eps(\Omega^\eps_{n,+})$ is the first eigenvalue of the problem
\begin{equation} \label{EVPF}
\left\{
\begin{gathered}
- \frac{1}{\eps^{2\alpha}} \frac{\partial^2 v^\epsilon}{{\partial x_1}^2} - 
\frac{1}{\epsilon^2} \frac{\partial^2 v^\epsilon}{{\partial x_2}^2} + v^\epsilon = \lambda_\eps \, v^\eps
\quad \textrm{ in } Q_0 \\
\frac{1}{\eps^{2\alpha}} \frac{\partial v^\epsilon}{\partial x_1} \nu_1^\epsilon + \frac{1}{\epsilon^2} \frac{\partial v^\epsilon}{\partial x_2}\nu_2^\epsilon = 0
\quad \textrm{ on } \partial Q_0 \backslash \Gamma_0 \\
v^\eps(x_1,0) = 0 \quad \textrm{ on } \Gamma_0
\end{gathered}
\right.
\end{equation}
and therefore,  
$$
\lambda_\eps(\Omega^\eps_+) = \min \left\{ \frac{ \frac{1}{\eps^{2 \alpha}} \int_{Q_0} \left| \frac{\partial^2 v^\epsilon}{{\partial x_1}^2} \right|^2 \, dx_1 dx_2 + \frac{1}{\epsilon^2} \int_{Q_0} \left| \frac{\partial^2 v^\epsilon}{{\partial x_2}^2} \right|^2 \, dx_1 dx_2 }{\int_{Q_0} \left| v^\eps \right|^2 \, dx_1 dx_2} \; \Big| \; v^\eps \in H^1(Q_0), \, v^\eps |_{\Gamma_0} = 0 \right\} \leq C.
$$

But this is impossible since $\lambda_\eps(\Omega^\eps_+) \geq \frac{1}{\eps^2} e_1(Q_0)$ 
for $\alpha >1$ and $\eps \in (0,1)$ where
$$
e_1(Q_0) = \min \left\{ \frac{ \int_{Q_0} \left( \left| \frac{\partial^2 v^\epsilon}{{\partial x_1}^2} \right|^2 + \left| \frac{\partial^2 v^\epsilon}{{\partial x_2}^2} \right|^2 \right) \, dx_1 dx_2 }{\int_{Q_0} \left| v^\eps \right|^2 \, dx_1 dx_2} \; \Big| \; v^\eps \in H^1(Q_0), \, v^\eps |_{\Gamma_0} = 0 \right\} 
$$
is the first eigenvalue of the Laplace operator in $Q_0$ with homogeneous Dirichlet boundary condition in $\Gamma_0$ and Neumann everywhere else. This eigenvalue is strictly positive by hypothesis (HQ). 
Thus we obtain (\ref{L2CONVG}).

\par\bigskip\noindent {\bf  (b). Test functions}. 

The test functions we are going to construct to pass the limit in the variational formulation (\ref{VFP3}) are
very similar to the ones we constructed in Type I thin domains. 
Take $\phi \in H^1(0,1)$, $\eps > 0$ 
and define the following functions in $H^1(\Omega^\eps)$:
\begin{equation} \label{TESTFG}
\begin{gathered}
\varphi^\eps(x_1,x_2) = \left\{
\begin{array}{ll}
X^\eps_n(x_1,x_2), & (x_1,x_2) \in \Omega^\eps_+\cap Q^\eps_n,\quad n=1,2,\ldots, N^\eps = \frac{1}{L \eps^\alpha} \\
\phi(x_1), & (x_1,x_2) \in \Omega_- 
\end{array}
\right.
\end{gathered}
\end{equation}
where  $Q^\eps_n$ is the rectangle 
$$
Q^\eps_n = (n L \eps^\alpha, (n+1) L \eps^\alpha) \times (0,G) 
$$
and the function $X^\eps_n$ is the solution of the problem
\begin{equation} \label{AUXSOLG}
\left\{
\begin{gathered}
- \frac{\partial^2 X^\eps}{\partial x_1^2} - \frac{1}{\eps^2} \frac{\partial^2 X^\eps}{\partial x_2^2} 
= 0, \quad \textrm{ in } Q^\eps_n \\
\frac{\partial X^\eps}{\partial N^\eps}=0, \quad \textrm{ on }  \partial Q^\eps_n \backslash \Gamma_n^\eps  \\
X^\eps(x_1,x_2) = \phi(x_1),  \quad \textrm{ on } \Gamma_n^\eps
\end{gathered}
\right. 
\end{equation}
where $\Gamma_n^\eps$ is the base of the rectangle, that is, 
$$
\Gamma_n^\eps = (n L \eps^\alpha, (n+1) L \eps^\alpha) \cap \partial Q_0.
$$

As we showed in the previous section, we have using Lemma \ref{basic-lemma}, 
\begin{eqnarray} \label{ESTXG2-a}
\left\| \frac{\partial^2 \varphi^\eps}{\partial x_1^2} \right\|^2_{L^2(\Omega^\eps_+)}
+ \frac{1}{\eps^2} \left\| \frac{\partial^2 \varphi^\eps}{\partial x_2^2} \right\|^2_{L^2(\Omega^\eps_+)}
 \leq  C \, \eps^{\alpha -1} \left\| \phi' \right\|^2_{L^2(0,1)}
\end{eqnarray}   
which implies that 
\begin{eqnarray} \label{ESTXG2}
\left\|  \varphi^\eps \right\|^2_{L^2(\Omega^\eps_+)}+\left\| \frac{\partial^2 \varphi^\eps}{\partial x_1^2} \right\|^2_{L^2(\Omega^\eps_+)}
+ \frac{1}{\eps^2} \left\| \frac{\partial^2 \varphi^\eps}{\partial x_2^2} \right\|^2_{L^2(\Omega^\eps_+)}
 \leq  C.
 \end{eqnarray}

Moreover, we can show that
\begin{equation} \label{TFCONVG}
\| \varphi^\eps - \phi \|_{L^2(\Omega^\eps)} \to 0 \textrm{ as } \eps \to 0.
\end{equation}
We can argue as in (\ref{L2CONVG}). If it were not true, then there will exist a $c_0>0$ and a sequence
(that we still denote it by $\eps$) such that $\| \varphi^\eps - \phi \|_{L^2(\Omega^\eps)}\geq c_0$. But then, if we define $w^\eps=\varphi^\eps - \phi$, we will have that 
$$
J(w^\eps) = \frac{\displaystyle\left\| \frac{ \partial w^\eps}{\partial x_1} \right\|_{L^2(\Omega^\eps_+)}^2 
+ \frac{1}{\eps^2} \left\| \frac{\partial w^\eps}{\partial x_2} \right\|_{L^2(\Omega^\eps_+)}^2 
+ \left\| w^\eps \right\|_{L^2(\Omega^\eps_+)}^2}{\displaystyle\| w^\eps \|^2_{L^2(\Omega^\eps_+)}} \leq \frac{C}{c_0} = \tilde C
$$
but with the same steps as we did in (a) this will contradict the fact that $e_1(Q_0)>0$.

\par\bigskip\noindent {\bf  (c). Pass to the limit}. 

Now we can pass to the limit in the variational formulation (\ref{VFP3}).
First, we note that the convergence of
\begin{equation} \label{INTG4}
\begin{gathered}
\int_{\Omega^\epsilon_+} \Big\{ \frac{\partial u^\epsilon}{\partial x_1} \frac{\partial \varphi^\eps}{\partial x_1} 
+ \frac{1}{\epsilon^2} \frac{\partial u^\epsilon}{\partial x_2} \frac{\partial \varphi^\eps}{\partial x_2} \Big\} dx_1 dx_2 \to 0,\hbox{ as } \eps\to 0
\end{gathered}
\end{equation}
follows from (\ref{ESTXG2-a}) and can be obtained as in (\ref{INT1}). 

Also, from  \eqref{WCG} and since $\varphi^\eps\equiv \phi$ in $\Omega_-$, we easily get
\begin{equation} \label{INTG3}
\int_{\Omega_-} \Big\{ \frac{\partial u^\epsilon}{\partial x_1} \frac{\partial \varphi^\eps}{\partial x_1} 
+ \frac{1}{\epsilon^2} \frac{\partial u^\epsilon}{\partial x_2} \frac{\partial \varphi^\eps}{\partial x_2} \Big\} dx_1 dx_2
\to \int_0^1 b(x_1) \, u'_0(x_1) \,  \phi'(x_1) \, dx_1 \textrm{ as } \eps \to 0.
\end{equation}

Let us consider now the following technical result. 
\begin{lemma}\label{lemma-technical}
We have
$$
\int_{S^\eps(x_1)}  dx_2\to q(x_1)\equiv \frac{|Q_0|}{L} + b(x_1) \qquad w^*-L^\infty(0,1).
$$
\end{lemma}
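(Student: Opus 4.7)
The plan is to identify $\int_{S^\eps(x_1)}\,dx_2$ as the sum of a non-oscillating part coming from $\Omega_-$ and a rapidly oscillating $L$-periodic part coming from $\Omega^\eps_+$, and then invoke the classical weak-$*$ convergence of highly oscillatory periodic functions to their mean.

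First I would decompose the section. For each $x_1\in(0,1)$, the lower portion $S^\eps(x_1)\cap(-b(x_1),0)$ contributes the constant-in-$\eps$ length $b(x_1)$. For the upper portion, note that by the definition of $\Omega^\eps_{n,+}$, if $x_1\in((n-1)L\eps^\alpha,nL\eps^\alpha)$ with $1\le n\le N_\eps$, then $(x_1,y)\in\Omega^\eps_{n,+}$ if and only if $(x_1/\eps^\alpha-(n-1)L,\,y)\in Q_0$. Hence the length of the upper section equals $\ell(x_1/\eps^\alpha-(n-1)L)$, where
$$\ell(y_1)\;:=\;\bigl|\{y_2\in(0,G):(y_1,y_2)\in Q_0\}\bigr|,\qquad y_1\in(0,L).$$
Extending $\ell$ as an $L$-periodic function $\tilde\ell$ on $\R$, and setting $a_\eps:=N_\eps L\eps^\alpha\in(1-L\eps^\alpha,1]$, we may write
$$\int_{S^\eps(x_1)}dx_2=b(x_1)+\tilde\ell(x_1/\eps^\alpha)\,\mathbf{1}_{(0,a_\eps)}(x_1),\qquad x_1\in(0,1).$$

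Second I would apply the standard fact that for any $L$-periodic function $\tilde\ell\in L^\infty(\R)$,
$$\tilde\ell(\cdot/\eps^\alpha)\;\overset{*}{\rightharpoonup}\;\frac{1}{L}\int_0^L\tilde\ell(y_1)\,dy_1\quad\text{in }L^\infty(0,1)\text{ as }\eps\to 0,$$
and observe by Fubini that $\int_0^L\ell(y_1)\,dy_1=|Q_0|$. The truncation at $a_\eps$ is harmless: for any test function $\psi\in L^1(0,1)$ we have
$$\Bigl|\int_{a_\eps}^1\tilde\ell(x_1/\eps^\alpha)\,\psi(x_1)\,dx_1\Bigr|\le\|\tilde\ell\|_{L^\infty}\int_{a_\eps}^1|\psi|\;\longrightarrow\;0$$
by absolute continuity of the Lebesgue integral, since $1-a_\eps\le L\eps^\alpha\to 0$. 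Combining the two observations yields
$$\int_{S^\eps(x_1)}dx_2\;\overset{*}{\rightharpoonup}\;b(x_1)+\frac{|Q_0|}{L}=q(x_1)\quad\text{in }L^\infty(0,1),$$
which is the claim.

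The argument is essentially a repackaging of the classical Riemann–Lebesgue/average-theorem principle; there is no genuine obstacle. The only points deserving care are (i) recognizing that the section length in $\Omega^\eps_+$ is exactly the $\eps^\alpha$-rescaling of an $L$-periodic function built from $Q_0$ (which is where hypothesis (HQ)'s geometric description of $Q_0\subset(0,L)\times(0,G)$ enters, although positivity of $e_1(Q_0)$ is not used here), and (ii) controlling the short interval $(a_\eps,1)$ not covered by complete cells, which vanishes in the limit because its length is $O(\eps^\alpha)$.
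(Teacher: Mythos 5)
Your proof is correct and is essentially the paper's argument in slightly different notation. The paper writes the upper contribution as $\int_0^G \chi(x_1/\eps^\alpha, x_2)\,dx_2$ where $\chi$ is the periodically extended characteristic function of $Q_0$; this is precisely your $\tilde\ell(x_1/\eps^\alpha)$, since $\int_0^G\chi(y_1,x_2)\,dx_2 = \ell(y_1)$, and both proofs then invoke the Average Theorem and Fubini to identify the mean with $|Q_0|/L$. The one small thing you add beyond the paper's write-up is the explicit treatment of the incomplete cell $(a_\eps,1)$; the paper silently absorbs this, so your extra care is a genuine (if minor) improvement in rigor rather than a different approach.
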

\begin{proof} If we denote by 
$\chi$ the characteristic function of the measurable open set $Q_0$, extended periodically with respect to the
first variable, we have by the Average Theorem that
\begin{eqnarray} \label{WCTS}
\int_{S^\eps(x_1)}  dx_2 & = & \int_0^G \chi(x_1/\eps^\alpha,x_2) \, dx_2  + \int_{-b(x_1)}^0 dx_2 \nonumber \\
& \rightharpoonup & \int_0^G \left( \frac{1}{L} \int_0^L  \chi(s, x_2) \, ds \right) dx_2 + b(x_1) 
\qquad w^*-L^\infty(0,1) \nonumber \\
& = & \frac{|Q_0|}{L} + b(x_1) \quad \forall x_1 \in (0,1).  \nonumber 
\end{eqnarray}
\end{proof}

Moreover, 
\begin{eqnarray*}
\int_{\Omega^\eps} u^\eps \, \varphi^\eps \, dx_1 dx_2 & = & 
\int_{\Omega^\eps} \left( u^\eps - u_0 \right) \, \varphi^\eps \, dx_1 dx_2
+ \int_{\Omega^\eps} u_0 \, \left( \varphi^\eps - \phi \right) \, dx_1 dx_2
+ \int_{\Omega^\eps} u_0 \, \phi \, dx_1 dx_2
\end{eqnarray*}
and the first two integrals go to 0 since $\|u^\eps-u^0\|_{L^2(\Omega^\eps)}\to 0$ and 
$\|\varphi^\eps-\phi\|_{L^2(\Omega^\eps)}\to 0$.  The last integral satisfies,
$$
\int_{\Omega^\eps} u_0(x_1) \, \phi(x_1) \, dx_1 dx_2 = \int_0^1 u_0(x_1) \, \phi(x_1) \, \left( \int_{S^\eps(x_1)}  dx_2 \right) \, dx_1 \to \int_0^1 q(x_1)u_0(x_1) \, \phi(x_1)dx_1
$$
where we have used Lemma \ref{lemma-technical}.

Finally, we have 
$$
\int_{\Omega^\eps} f^\eps \, \varphi^\eps \, dx_1 dx_2 = 
\int_{\Omega^\eps} f^\eps \, \left( \varphi^\eps - \phi \right) \, dx_1 dx_2
+ \int_{\Omega^\eps} f^\eps \, \phi \, dx_1 dx_2
$$ 
but the first integral goes to 0.  Moreover, with the hypothesis of the theorem, we get for the
second integral
$$
\int_{\Omega^\eps} f^\eps \, \phi \, dx_1 dx_2 = 
\int_0^1 \left( \int_{S^\eps(x_1)} f^\eps(x_1,x_2) \, dx_2 \right) \phi(x_1) \, dx_1\to  \int_0^1 \hat f (x_1) \, \phi(x_1) \, dx_1. 
$$

Therefore, we obtain from the estimates above that 
\begin{equation} \label{limitPG}
\int_0^1 \left\{ b(x_1) \, u'_0(x_1) \, \phi'(x_1) + q(x_1) \, u_0(x_1) \, \phi(x_1) \right\} dx_1  
= \int_0^1 \hat f(x_1) \, \phi(x_1) \, dx_1 \quad \forall \phi \in H^1(0,1).
\end{equation}
Since this problem has a unique solution, then we obtain that the sequence $\{ u^\eps \}_{\eps > 0}$ is convergent and converges to the unique solution $u_0$ of (\ref{limitPG}).

 Moreover, arguing as \eqref{SEILA} at the last section, we obtain the strong convergence $u^\epsilon \to u_0$ in $H^1(\Omega_-)$ by \eqref{L2CONVG}, \eqref{LIMITF} and \eqref{limitPG} concluding the proof.

\end{proof}


\end{document}